\DeclareMathOperator{\pr}{\mathsf P}
\DeclareMathOperator{\M}{\mathsf E}
\DeclareMathOperator{\supp}{supp}
\DeclareMathOperator{\esssup}{ess\,sup}
\newcommand{\eps}{\varepsilon}
\newcommand{\R}{\mathbb{R}}
\newcommand{\F}{\mathcal{F}}
\newcommand{\LT}{L_2([0,1])}
 \journalname{Methodology and Computing in Applied Probability}
\begin{document}

\title
{Approximation of fractional Brownian motion by martingales}


\titlerunning{Approximation of fBm by martingales}        

\author{Sergiy Shklyar         \and
        Georgiy Shevchenko \and
        Yuliya Mishura \and
        Vadym Doroshenko \and
        Oksana Banna  
}

\authorrunning{S. Shklyar, G. Shevchenko, Yu. Mishura, V. Doroshenko, O. Banna} 

\institute{S. Shklyar,
 \at
              Kyiv National Taras Shevchenko University, Faculty of Mechanics and Mathematics,
Volodymyrska 64, 01601 Kyiv, Ukraine \\
              Tel.: +380-44-259-03-92\\
              Fax: +380-44-259-03-92\\
              \email{shklyar@univ.kiev.ua}           
           \and
           G. Shevchenko
 \at
              Kyiv National Taras Shevchenko University, Faculty of Mechanics and Mathematics,
Volodymyrska 64, 01601 Kyiv, Ukraine \\
              Tel.: +380-44-259-03-92\\
              Fax: +380-44-259-03-92\\
              \email{zhora@univ.kiev.ua}
              \and
              Yu. Mishura
 \at
              Kyiv National Taras Shevchenko University, Faculty of Mechanics and Mathematics,
Volodymyrska 64, 01601 Kyiv, Ukraine \\
              Tel.: +380-44-259-03-92\\
              Fax: +380-44-259-03-92\\
              \email{myus@univ.kiev.ua}           
           \and
           V. Doroshenko
 \at
              Kyiv National Taras Shevchenko University, Faculty of Mechanics and Mathematics,
Volodymyrska 64, 01601 Kyiv, Ukraine \\
              Tel.: +380-44-259-03-92\\
              Fax: +380-44-259-03-92\\
              \email{vadym.doroshenko@univ.kiev.ua}           
           \and
           O. Banna
 \at
              Kyiv National Taras Shevchenko University, Economics Faculty,
Volodymyrska 64, 01601 Kyiv, Ukraine \\
              Tel.: +380-44-259-03-92\\
              Fax: +380-44-259-03-92\\
              \email{okskot@ukr.net}           
}

\date{Received: date / Accepted: date}

\maketitle

\begin{abstract}
We study the problem of optimal approximation of a fractional Brownian motion by martingales.
We prove that there exist a unique martingale closest to fractional Brownian motion in a specific sense.
It shown that this martingale has a specific form. Numerical results concerning the approximation problem are given.
\keywords{Fractional Brownian motion \and Martingale \and Approximation\and Convex functional}
 \subclass{60G22 \and 60G44 \and 90C25}
\end{abstract}

\maketitle
\section{Introduction}

Let $B^H=\{B_t^H, \mathcal{F}_t^{B^H}, t\in [0,1]\}$ be a fractional Brownian motion with Hurst index $H\in (0,1)$. It means that $B^H$ is a centered Gaussian process with a covariance function $\M[B_t^H B_s^H]=\frac12(s^{2H}+t^{2H}-|t-s|^{2H})$. It is well known that a fractional Brownian motion is neither a semimartingale nor a Markov process unless $H=1/2$. So a simple and natural question is how far is Brownian motion from being a martingale? That is, in a sense,  we look for the projection of fractional Brownian motion on the space of (square integrable) martingales. Thus, initially,  the problem is formulated in such a way: we are looking for a
square integrable $\mathcal{F}^{B^H}$-martingale $M$ that minimizes the value
$$d_H(M)^2:=
\sup_{t\in [0,1]}\M(B_t^H-M_t)^2.$$ To proceed with the solution of this problem, we can use the representation of the fractional Brownian motion via the standard Brownian motion  on the finite interval (\cite{Norros}).  Introduce the kernel $$K(t,s)= C_\alpha \Big( t^\alpha s^{-\alpha} (t-s)^\alpha-\alpha s^{-\alpha}\int_s^tu^{\alpha-1}(u-s)^{\alpha}du\Big)1_{0<s<t\leq 1},$$ where $C_\alpha = \alpha \left({\frac{(2\alpha+1)\Gamma(1-\alpha)}{\Gamma(\alpha+1)\Gamma(1-2\alpha)}}\right)^{1/2}$,
 $\Gamma$  is the Gamma function, $\alpha=H-1/2$. Then there exists  $\mathcal{F}^{B^H}$-Wiener process $W=\{W_t, \mathcal{F}_t^{B^H}, t\in [0,1]\}$ such that $B^H$ admits the representation
\begin{gather}\label{fBm-repres}B_t^H=\int_0^1 K(t,s)dW_s=\int_0^t K(t,s)dW_s\\\nonumber =C_\alpha\int_0^t \Big( t^\alpha s^{-\alpha} (t-s)^\alpha-\alpha s^{-\alpha}\int_s^tu^{\alpha-1}(u-s)^{\alpha}du\Big)dW_s.\end{gather}
In what follows we consider fractional Brownian motion with $H\in (1/2,1)$, and in this case the kernel  $K(t,s)$ has a simpler form:
   \begin{equation}\label{oj4}K(t,s)=C_\alpha s^{-\alpha}\int_s^tu^\alpha(u-s)^{\alpha-1}du1_{0<s<t\leq 1}.\end{equation}

Turning back to our problem, we observe first that $B^H$ and $W$ generate the same filtration, so any square integrable $\mathcal{F}^{B^H}$-martingale $M$ admits a representation
\begin{equation}\label{itorep}
M_t = \int_0^t \alpha_s dW_s,
\end{equation}
where $\alpha$ is an $\mathcal{F}^{B^H}$-adapted square integrable process. Hence we can write
\begin{gather*}
\M(B_t^H - M_t)^2 = \M\left(\int_0^t (K(t,s)-\alpha_s) dW_s\right)^2 = \int_0^t \M(K(t,s)-\alpha_s)^2 ds\\
  = \int_0^t (K(t,s)- \M \alpha_s)^2 ds + \int_0^t \mathrm{Var}(\alpha_s) ds.
\end{gather*}
Consequently, it is enough to minimize $d_H(M)$ over \emph{Gaussian} martingales, i.e. those having representation
\eqref{itorep} with a non-random $\alpha$.

So, the main problem reduces to the following one:

\begin{itemize}
\item[(A)]Find
$$\inf_{a\in  L_2([0,1])}\sup_{t\in[0,1]}  \int_0^t (K(t,s)- a(s) )^2 ds $$
and a minimizing element $a\in  L_2([0,1])$ if the infimum is attained.
\end{itemize}
Note that the expression being minimized does not involve neither the fractional Brownian motion nor the Wiener process,
so the problem becomes purely analytic. 

 The  paper is organized as follows. Sections 2 and 3 are devoted to the general problem of minimization of the  functional $f$ on $L_2([0,1])$ that has the following   form
\begin{equation}
f(x) = \sup_{t\in [0,1]} \Big({
\int_0^t (K(t,s)-x(s))^2 \, d s}\Big)^{1/2}
\label{eq-fx}
\end{equation}
with arbitrary  kernel $K(t,s)$ satisfying condition
\begin{itemize}
\item[(B)] for any  $t\in[0,1]$ the kernel $K(t,\cdot)\in L_2([0,t])$ and
\begin{equation}\label{eq_l2}
\sup_{t\in[0,1]}\int_0^t K(t,s)^2ds < \infty.
\end{equation}
\end{itemize}
We shall call this functional the principal functional. It is proved in Section 2 that the principal functional $f$ is convex, continuous and unbounded on infinity,  consequently, the minimum is attained. Section 3 gives an example of kernel $K(t,s)$
where  a minimizing function for principal functional is not unique (moreover, being convex, the set of minimizing functions is infinite). Sections 4--6 are devoted to the problem of minimization of principal functional $f$ with the kernel $K$ corresponding to fractional Brownian motion, i.e., with the kernel $K$ from \eqref{oj4}. It is proved in Section 4 that in  this case  the minimizing function for the principal functional is unique. In Section 5 it is proved that the minimizing function has a special
form. Section 6 contains some numerical results.

\section{The existence of minimizing function for the principal functional}

In this section we consider arbitrary kernel $K$ satisfying assumption (B), which implies that the functional $f$ is well defined for any  $x\in L_2([0,1])$.

\begin{lemma}
For any $x,y\in L_2([0,1])$
\begin{equation}\label{eq_lip}
|f(x) - f(y)| \le \|x-y\| _{L_2([0,1])}.
\end{equation}
\end{lemma}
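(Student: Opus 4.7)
The plan is to exploit the fact that for each fixed $t \in [0,1]$, the inner quantity $g_t(x) := \left(\int_0^t (K(t,s) - x(s))^2\,ds\right)^{1/2}$ is simply the norm $\|K(t,\cdot) - x\|_{L_2([0,t])}$, so the reverse triangle inequality applies pointwise in $t$, and taking the supremum preserves the estimate.

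First I would fix $t \in [0,1]$ and apply the triangle inequality in $L_2([0,t])$ to get
$$g_t(x) = \|K(t,\cdot) - x\|_{L_2([0,t])} \le \|K(t,\cdot) - y\|_{L_2([0,t])} + \|x - y\|_{L_2([0,t])} = g_t(y) + \|x - y\|_{L_2([0,t])}.$$
Since the integrand $(x-y)^2$ is nonnegative, extending the domain of integration only increases the norm, so $\|x-y\|_{L_2([0,t])} \le \|x-y\|_{L_2([0,1])}$. Substituting gives $g_t(x) \le g_t(y) + \|x-y\|_{L_2([0,1])}$ for every $t$.

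Next I would take the supremum over $t \in [0,1]$ on both sides. On the left this yields $f(x)$; on the right, since the last term is independent of $t$, I get $f(y) + \|x-y\|_{L_2([0,1])}$. Hence $f(x) - f(y) \le \|x-y\|_{L_2([0,1])}$. Interchanging the roles of $x$ and $y$ gives the reverse inequality, and together they yield \eqref{eq_lip}. One minor point to note is that condition (B) guarantees $g_t(x) < \infty$ for every $x \in L_2([0,1])$ (via $\|K(t,\cdot) - x\|_{L_2([0,t])} \le \|K(t,\cdot)\|_{L_2([0,t])} + \|x\|_{L_2([0,1])}$) and that $f(x) < \infty$, so all quantities are finite and no subtraction of $+\infty$ occurs.

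There is essentially no obstacle here — the argument is a direct reduction to the triangle inequality for Hilbert-space norms, with the only subtlety being the monotonicity of the $L_2$ norm in the integration domain, which is immediate from nonnegativity.
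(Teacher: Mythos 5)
Your proof is correct and follows essentially the same route as the paper: apply the triangle inequality in $L_2([0,t])$ for each fixed $t$, bound $\|x-y\|_{L_2([0,t])}$ by $\|x-y\|_{L_2([0,1])}$, take the supremum over $t$, and swap $x$ and $y$. The added remark on finiteness via condition (B) is a harmless extra precaution not present in the paper's proof.
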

\begin{proof} Evidently, for any $x, y\in L_2([0,1])$ and  $0\le t\le 1$
\[
\Big({\int_0^t (K(t,s)-x(s))^2 \, d s}\Big)^{1/2} \le
\Big({\int_0^t (x(s) - y(s))^2 \, d s}\Big)^{1/2} +
\Big({\int_0^t (K(t,s)-y(s))^2 \, d s}\Big)^{1/2}.
\]

Therefore
\begin{gather*}
\sup_{t\in [0,1]} \Big({\int_0^t (K(t,s)-x(s))^2 \, d s}\Big)^{1/2}
\\
 \le
\sup_{t\in [0,1]} \Big({\int_0^t (x(s) - y(s))^2 \, d s}\Big)^{1/2} +
\sup_{t\in [0,1]} \Big({\int_0^t (K(t,s)-y(s))^2 \, d s}\Big)^{1/2},
\end{gather*}
which is clearly equivalent to the inequality
\begin{gather*}
f(x) \le \|x-y\|_{L_2([0,1])} + f(y).
\end{gather*}
Swapping $x$ and $y$, we get the proof.
\end{proof}
\begin{corollary}\label{cor:f-cont}
The functional $f$ is continuous on $L_2([0,1])$.
\end{corollary}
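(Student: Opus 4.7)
The corollary is essentially free from the preceding lemma, so the plan is extremely short. The lemma establishes
\[
|f(x)-f(y)| \le \|x-y\|_{L_2([0,1])}
\]
for all $x,y\in L_2([0,1])$, which is the statement that $f$ is Lipschitz continuous with constant $1$ with respect to the $L_2$ norm.

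My plan is simply to invoke this inequality and observe that Lipschitz continuity implies (uniform) continuity. Concretely, given $\eps>0$ and $x\in L_2([0,1])$, I would take $\delta=\eps$, so that whenever $\|x-y\|_{L_2([0,1])}<\delta$ we get $|f(x)-f(y)|<\eps$. This holds at every point $x$, and in fact uniformly in $x$, so $f$ is continuous on all of $L_2([0,1])$.

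There is no real obstacle here — all the analytic content was carried by the inequality \eqref{eq_lip}. The only thing worth noting is that continuity is automatically uniform (and the modulus of continuity is bounded by the identity), which is a slightly stronger statement than what the corollary claims; but since the corollary asks only for continuity, a one-line appeal to the lemma suffices.
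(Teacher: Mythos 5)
Your proof is correct and matches the paper's reasoning: the corollary is an immediate consequence of the Lipschitz bound \eqref{eq_lip} from the preceding lemma, which is exactly what the paper intends. Nothing further is needed.
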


\begin{lemma}\label{lem:f-ineq1}
The following inequalities hold for any function $x\in L_2([0,1])$:
\begin{equation}\label{ineq1}
\|x\|_{L_2([0,1])} - \|K(1,\cdot)\|_{L_2([0,1])} \le f(x)\le \|x\|_{L_2([0,1])} + f(0).
\end{equation}

\end{lemma}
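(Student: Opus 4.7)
The plan is to obtain the two inequalities separately, and both should follow straight from what has just been established together with one application of the (reverse) triangle inequality in $L_2([0,1])$.

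For the upper bound $f(x) \le \|x\|_{L_2([0,1])} + f(0)$, I would simply invoke the preceding lemma with $y = 0$. Inequality \eqref{eq_lip} gives $|f(x) - f(0)| \le \|x - 0\|_{L_2([0,1])} = \|x\|_{L_2([0,1])}$, so rearranging yields $f(x) \le f(0) + \|x\|_{L_2([0,1])}$, which is exactly the required bound.

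For the lower bound $\|x\|_{L_2([0,1])} - \|K(1,\cdot)\|_{L_2([0,1])} \le f(x)$, the key observation is that the supremum defining $f(x)$ is at least its value at the particular point $t = 1$, hence
\[
f(x) \;\ge\; \Bigl(\int_0^1 (K(1,s) - x(s))^2 \, ds \Bigr)^{1/2} \;=\; \|K(1,\cdot) - x\|_{L_2([0,1])}.
\]
Applying the reverse triangle inequality $\|K(1,\cdot) - x\|_{L_2([0,1])} \ge \|x\|_{L_2([0,1])} - \|K(1,\cdot)\|_{L_2([0,1])}$ finishes the proof. Assumption (B) ensures $K(1,\cdot) \in L_2([0,1])$, so this last norm is finite and the inequality is meaningful.

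There is essentially no obstacle here: the whole argument is bookkeeping from the Lipschitz estimate \eqref{eq_lip} and the trivial lower bound obtained by evaluating the supremum at $t=1$. The only point worth double-checking is that $K(1,\cdot) \in L_2([0,1])$, which is immediate from condition (B).
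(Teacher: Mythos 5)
Your proof is correct and follows essentially the same route as the paper: the upper bound is obtained from the Lipschitz estimate \eqref{eq_lip} with $y=0$, and the lower bound by evaluating the supremum at $t=1$ and applying the reverse triangle inequality in $L_2([0,1])$.
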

\begin{proof}
The left-hand side of \eqref{ineq1} immediately follows from the inequalities
\begin{gather*}
f(x) \ge \Big(\int_0^1 (K(1,s)-x(s))^2 \, d s\Big)^{1/2} =
\|K(1,\cdot)-x \|_{L_2([0,1])}
\\
\ge \|x\|_{L_2([0,1])} - \|K(1,\cdot)\|_{L_2([0,1])},
\end{gather*}
and the right-hand side follows from \eqref{eq_lip}.
\end{proof}

\begin{lemma}\label{lem:f-convex}
Functional  $f$ is convex on $L_2([0,1])$.
\end{lemma}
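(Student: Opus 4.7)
The plan is standard: write $f$ as the pointwise supremum of a family of convex functions, so convexity propagates through the supremum.

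First I would fix $t\in[0,1]$ and consider the auxiliary functional $g_t\colon L_2([0,1])\to[0,\infty)$ defined by
$$g_t(x) := \Big(\int_0^t (K(t,s)-x(s))^2\,ds\Big)^{1/2} = \|K(t,\cdot) - x\|_{L_2([0,t])},$$
which is finite for every $x\in L_2([0,1])$ thanks to assumption (B). This is an affine shift of the $L_2([0,t])$-norm evaluated on the restriction of $x$ to $[0,t]$, so convexity of $g_t$ follows immediately from the triangle inequality: for $\lambda\in[0,1]$ and $x,y\in L_2([0,1])$,
$$g_t(\lambda x+(1-\lambda)y) = \|\lambda(K(t,\cdot)-x) + (1-\lambda)(K(t,\cdot)-y)\|_{L_2([0,t])} \le \lambda g_t(x) + (1-\lambda)g_t(y).$$

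Then I would invoke the elementary fact that the pointwise supremum of a family of convex functions is convex. Since $f(x) = \sup_{t\in[0,1]} g_t(x)$, for any $x,y\in L_2([0,1])$ and $\lambda\in[0,1]$ we get
$$f(\lambda x + (1-\lambda)y) = \sup_{t\in[0,1]} g_t(\lambda x + (1-\lambda)y) \le \sup_{t\in[0,1]}\bigl(\lambda g_t(x) + (1-\lambda)g_t(y)\bigr) \le \lambda f(x) + (1-\lambda)f(y),$$
which is exactly the convexity of $f$.

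There is no real obstacle here; the only minor point worth making explicit is that $g_t$ takes finite values on all of $L_2([0,1])$ (guaranteed by (B)) so that $f$ is well defined on $L_2([0,1])$, and that the final supremum bound uses only the monotonicity $\sup(A+B)\le \sup A+\sup B$ applied with nonnegative scalars.
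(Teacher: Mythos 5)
Your proof is correct and follows essentially the same route as the paper: the paper likewise fixes $t$, applies the triangle inequality in $L_2([0,t])$ to the convex combination, and then takes the supremum over $t$. Your phrasing via ``a pointwise supremum of convex functions is convex'' is just a cleaner packaging of the identical argument.
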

\begin{proof}
We have to prove that for any $x,y\in L_2([0,1])$ and  any $\alpha\in[0,1]$
\[
f(\alpha x + (1-\alpha) y) \le \alpha f(x) + (1-\alpha) f(y) .
\] applying the triangle inequality, we have for any  $t\in [0,1]$
\begin{multline*}
\Big(\int_{0}^t (\alpha x(s) + (1-\alpha) y(s) - K(t,s))^2 \, d s\Big)^{1/2}
\\ \le
\Big(\int_{0}^t (\alpha \,(K(t,s)-x(s)))^2 \, d s\Big)^{1/2} +
\Big(\int_{0}^t ((1-\alpha) (K(t,s)-y(s)))^2 \, d s\Big)^{1/2},
\end{multline*}
whence
\begin{multline*}
\sup_{t \in [0,1]}
\Big(\int_{0}^t (\alpha x(s) + (1-\alpha) y(s) - K(t,s))^2 \, d s\Big)^{1/2}
 \\ \le
\alpha\sup_{t \in [0,1]}  \Big(\int_{0}^t (K(t,s)-x(s))^2 \, d s\Big)^{1/2} +
 (1 - \alpha)\sup_{t \in [0,1]}
\Big(\int_{0}^t (K(t,s)-y(s))^2 \, d s\Big)^{1/2},
\end{multline*}
and the proof follows.
\end{proof}

%

\begin{theorem}\label{thm_min_ex}
Functional  $f$  attains its minimal value on $L_2([0,1])$.
\end{theorem}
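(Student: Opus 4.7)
The plan is to use the standard direct method of the calculus of variations: coercivity plus weak lower semicontinuity in the Hilbert space $L_2([0,1])$.

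First I would let $m := \inf_{x \in L_2([0,1])} f(x)$ and choose a minimizing sequence $\{x_n\} \subset L_2([0,1])$ with $f(x_n) \to m$. The left-hand inequality in Lemma~\ref{lem:f-ineq1} gives
\[
\|x_n\|_{L_2([0,1])} \le f(x_n) + \|K(1,\cdot)\|_{L_2([0,1])},
\]
and since $f(x_n)$ converges it is bounded, so the sequence $\{x_n\}$ is norm-bounded in $L_2([0,1])$.

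Next, since $L_2([0,1])$ is a reflexive (indeed Hilbert) space, bounded sequences are weakly sequentially precompact, so I can extract a subsequence (still denoted $\{x_n\}$) with $x_n \rightharpoonup x^\ast$ weakly for some $x^\ast \in L_2([0,1])$. The key point is then to argue that $f$ is weakly lower semicontinuous. This is where I would invoke the standard fact that a convex functional which is continuous in the norm topology is automatically lower semicontinuous with respect to the weak topology (Mazur's lemma: every sublevel set $\{x : f(x) \le c\}$ is convex by Lemma~\ref{lem:f-convex} and norm-closed by Corollary~\ref{cor:f-cont}, hence weakly closed). Applied to the minimizing sequence this yields
\[
f(x^\ast) \le \liminf_{n\to\infty} f(x_n) = m,
\]
and since $m$ is the infimum we get $f(x^\ast) = m$, so the minimum is attained at $x^\ast$.

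The one step I would expect to require the most care is the weak lower semicontinuity: it relies on Mazur's lemma (or equivalently on the Hahn--Banach theorem separating a point outside a convex closed set from that set), and not on any direct pointwise or integral computation with the kernel $K$. Everything else is routine from Lemmas~\ref{lem:f-ineq1}, \ref{lem:f-convex} and Corollary~\ref{cor:f-cont}.
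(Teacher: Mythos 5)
Your proof is correct and uses exactly the same three ingredients as the paper --- coercivity from Lemma~\ref{lem:f-ineq1}, convexity from Lemma~\ref{lem:f-convex}, and norm-continuity from Corollary~\ref{cor:f-cont}; the only difference is that the paper then cites an abstract existence result (Proposition~2.3 of \cite{Bashirov}) as a black box, whereas you write out its standard direct-method proof (bounded minimizing sequence, weak compactness in the Hilbert space, and weak lower semicontinuity via Mazur). This is essentially the paper's argument with the cited proposition unpacked.
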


\begin{proof}
By Corollary~\ref{cor:f-cont} and
Lemma  \ref{lem:f-convex}
the functional $f$ is continuous and convex.  By Lemma \ref{lem:f-ineq1},  $f(x)$ tends to $+\infty$
as $\|x\| \to \infty$.
Hence it follows from \cite[Proposition~2.3]{Bashirov} that $f$ attains its minimal value.
\end{proof}
%
%

\section{An example of the principal functional with infinite  set of minimizing functions}
Note that the set $\mathfrak{M}_f$ of minimizing functions for functional $f$ is convex. In this section we consider an example of kernel $K$ for which $\mathfrak{M}_f$ contains more than one point, consequently, is infinite. At first, establish the following lower bound for functional $f$.
\begin{lemma}\label{lem_K_ineq}
1.  Let the kernel  $K$ of functional  $f$ defined by \eqref{eq-fx} satisfy assumption $(B)$. Then for any  $a \in L_2([0,1])$ and  $0 \le t_1<t_2 \le 1$ the following inequality holds
\begin{equation}
\label{eq-sj1}
\sup_{t\in[0,1]} \int_0^t (K(t,s) - a(s))^2 ds \ge
\frac14 \int_0^{t_1} (K(t_2,s) - K(t_1,s))^2 ds .
\end{equation}

2. The equality in \eqref{eq-sj1} implies that
\begin{gather}
\label{eq-sj2}
a(s) = 1/2(K(t_1,s) + K(t_2,s)) \quad \text{a.e. on  $[0,t_1)$},\\
\label{eq-sj3}
a(s) = K(t_2,s) \quad \text{a.e. on $[t_1,t_2]$}.
\end{gather}
\end{lemma}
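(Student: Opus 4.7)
The plan is to bound the supremum on the left by two particular values, namely the integrals at $t=t_1$ and $t=t_2$, then use the elementary identity $u^2+v^2 = \tfrac12(u+v)^2 + \tfrac12(u-v)^2$ to extract the desired quantity $K(t_2,s) - K(t_1,s)$.

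First I would observe that
\[
\sup_{t\in[0,1]} \int_0^t (K(t,s) - a(s))^2\,ds \ge \max\Bigl\{\int_0^{t_1}(K(t_1,s)-a(s))^2\,ds,\ \int_0^{t_2}(K(t_2,s)-a(s))^2\,ds\Bigr\},
\]
and the max is bounded below by the average of the two quantities. Splitting the second integral at $t_1$ yields
\[
\sup_{t\in[0,1]} \int_0^t (K(t,s) - a(s))^2\,ds \ge \tfrac12\int_0^{t_1}\bigl[(K(t_1,s)-a(s))^2 + (K(t_2,s)-a(s))^2\bigr]\,ds + \tfrac12\int_{t_1}^{t_2}(K(t_2,s)-a(s))^2\,ds.
\]

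Next I would apply the parallelogram identity with $u = K(t_1,s)-a(s)$ and $v = K(t_2,s)-a(s)$, which gives
\[
(K(t_1,s)-a(s))^2 + (K(t_2,s)-a(s))^2 = \tfrac12(K(t_2,s)-K(t_1,s))^2 + \tfrac12(K(t_1,s)+K(t_2,s)-2a(s))^2.
\]
Integrating over $[0,t_1]$, multiplying by $\tfrac12$, and dropping the non-negative terms (the $(K(t_1,s)+K(t_2,s)-2a(s))^2$ term on $[0,t_1]$ and the $(K(t_2,s)-a(s))^2$ term on $[t_1,t_2]$) yields \eqref{eq-sj1}.

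For part 2, the chain of inequalities collapses to an equality only if each dropped term vanishes. Vanishing of $\int_0^{t_1}(K(t_1,s)+K(t_2,s)-2a(s))^2\,ds$ gives \eqref{eq-sj2}, and vanishing of $\int_{t_1}^{t_2}(K(t_2,s)-a(s))^2\,ds$ gives \eqref{eq-sj3}. There is no real obstacle here — the argument is a straightforward chain of elementary inequalities — the only care needed is in tracking which terms are dropped so that the equality characterization in part~2 is read off correctly.
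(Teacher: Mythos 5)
Your proof is correct and follows essentially the same route as the paper: bound the supremum below by the values at $t_1$ and $t_2$, replace the max by the average, and extract $(K(t_2,s)-K(t_1,s))^2$ via the elementary square identity (the paper uses $(P+Q-2r)^2\ge 0$, which is the same parallelogram computation). The only difference is bookkeeping — you keep the $\int_{t_1}^{t_2}$ piece through the averaging step instead of discarding it first, which lets you read off both equality conditions \eqref{eq-sj2} and \eqref{eq-sj3} in one pass, whereas the paper derives \eqref{eq-sj3} in a second step after establishing \eqref{eq-sj2}; both are valid.
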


\begin{proof}
1. Following inequalities are evident:
\begin{multline}\label{oj1}
\sup_{t\in[0,1]} \int_0^t (K(t,s)-a(s))^2 ds  \\ \ge
\max\left\{
\int_0^{t_1} (K(t_1,s)-a(s))^2 ds, \;
\int_0^{t_2} (K(t_2,s)-a(s))^2 ds \right\} \\ \ge
\max\left\{
\int_0^{t_1} (K(t_1,s)-a(s))^2 ds, \;
\int_0^{t_1} (K(t_2,s)-a(s))^2 ds \right\}\\ \ge 
\frac12\int_0^{t_1}
({(K(t_1,s)-a(s))^2 + (K(t_2,s)-a(s))^2}) \, ds .
\end{multline}

From $(P+Q-2r)^2\ge 0$ we immediately get
\begin{gather}
2\left(\frac{P-r}{2}\right)^2 +
2\left(\frac{Q-r}{2}\right)^2
 \ge
\frac{(P-Q)^2}{4}.
\label{eq-sj4}
\end{gather}
Setting
$P = K(t_1,s)$, $Q=K(t_2,s)$ and  $r = a(s)$ in this inequality, we get from \eqref{oj1}
\begin{multline}
\sup_{t \in [0,1]} \int_0^t (K(t,s) - a(s))^2 ds \ge
\frac12\int_0^{t_1} ({(K(t_1,s)-a(s))^2 + (K(t_2,s)-a(s))^2}) \, ds  \\ \ge
\frac14 \int_0^{t_1} (K(t_2,s) - K(t_1,s))^2 ds .
\label{eq-sj5}
\end{multline}
Thus, inequality  \eqref{eq-sj1} is proved.

2. We now show that equality in \eqref{eq-sj1} implies
 \eqref{eq-sj2} and  \eqref{eq-sj3}.
Indeed, equality in \eqref{eq-sj4}  holds if and only if
$P+Q-2r=0$.
Equality in  \eqref{eq-sj5} has a form
\[
1/2\int_0^{t_1} (K(t_1,s)-a(s))^2 + (K(t_2,s)-a(s))^2 \, ds =
1/4\int_0^{t_1} (K(t_1,s)-K(t_2,s))^2 \, ds
\]
and holds if and only if
\[
K(t_1,s) + K(t_2,s) - 2 a(s) = 0 \quad
\mbox{a.e. on $[0,t_1)$,}
\]
i.e.  it holds if and only if   condition  \eqref{eq-sj2} holds.

If \eqref{eq-sj2} holds, then
\begin{gather*}
\int_0^{t_1} (K(t_1,s) - a(s))^2 ds =
\frac{1}{4} \int_0^{t_1} (K(t_1,s) - K(t_2,s))^2 ds,
\end{gather*} and
\begin{gather*}
\int_0^{t_2} (K(t_2,s) - a(s))^2 ds =
\frac{1}{4} \int_0^{t_1} (K(t_2,s) - K(t_1,s))^2 ds +
\int_{t_1}^{t_2} (K(t_2,s) - a(s))^2 ds.
\end{gather*}
It means that under condition  \eqref{eq-sj2} equality  \eqref{eq-sj1} holds only if
\begin{gather*}
\int_{t_1}^{t_2} (K(t_2,s) - a(s))^2 ds = 0,
\end{gather*}
i.e. only if \eqref{eq-sj3} holds.
\end{proof} 

\begin{remark} Let the kernel  $K$ of functional  $f$ from \eqref{eq-fx} satisfy assumption (A). Then for any  $a \in L_2([0,1])$ and  $0 \le t_1<t_2 \le 1$
\begin{equation}
\label{oj2}\max_{t\in\{t_1,t_2\}}
\int_0^t (K(t,s)-a(s))^2 ds \ge
\frac14 \int_0^{t_1} (K(t_2,s) - K(t_1,s))^2 ds.
\end{equation}
Equality in \eqref{oj2} holds if and only if (\ref{eq-sj2}) and  (\ref{eq-sj3}) hold.
\end{remark}

\begin{example}[Functional $f$ with infinite set $\mathfrak{M}_f$.]
Take the kernel  $K(t,s)$ of the form
$K(t,s) = g(t) h(s)$, $t, s \in [0,1]$, where
$$g(t) = (6t-2)1_{\frac13 \le t \le \frac12}+(4-6t)1_{\frac12\le t \le \frac56}+(6t-6)1_{\frac56 \le t \le 1}$$
and $$h(s) = 4s1_{0\le s \le \frac14}+(2-4s)1_{\frac14 \le s \le \frac12}.$$

Then
\begin{equation}
\min_{a\in L_2([0,1])}
\max_{t\in [0,1]}
\int_0^t (K(t,s)-a(s))^2 ds = 1/6,
\label{eq-sj-minmax16}
\end{equation}
and  $\mathfrak{M}_f$ consists of functions $a(s)$
satisfying the conditions
\begin{gather}\label{eq-sjstar1}
a(s) = 0 \quad \text{a.e. on  $[0,5/6]$}\end{gather}
 and
\begin{gather}\int_{5/6}^t a(s)^2 ds \le 1/6  - 6(1-t)^2, \quad
5/6 \le t \le 1 .
\label{eq-sjstar2}
\end{gather}
\begin{remark} 1. Since  $K\in C([0,1]^2)$ and  $a\in L_2([0,1])$, we have that
$\int_0^t (K(t,s) - a(s))^2 ds$ is continuous in
$t$, therefore we can replace $\sup_{t\in[0,1]}$ with $\max_{t\in[0,1]}$ in inequality (\ref{eq-sj-minmax16}).

2. Some examples of functions satisfying  (\ref{eq-sjstar1}) and  (\ref{eq-sjstar2}):
$a(s) = 0,  s\in [0,1];$
$a(s) =
  (12(1-s))^{1/2}1_{5/6 < s \le 1};$
$a(s)=
  \sqrt{3} (6s-5)1_{5/6\le s \le 1}.$
\end{remark}

To establish a lower bound on the left-hand side of  \eqref{eq-sj-minmax16}, note that
$
\int_0^t h(s)^2 ds = 1/6
\quad \mbox{for} \quad 1/2 \le t \le 1.
$
Therefore, applying  Lemma \ref{lem_K_ineq} with $t_1=1/2$ and $t_2=5/6$ we obtain that
\begin{multline}
\sup_{t \in [0,1]} \int_0^t (K(t,s)-a(s))^2 ds \ge
\frac14 \int_0^{1/2}
\left( K({\textstyle5/6},s) - K({\textstyle1/2},s)\right)^2 ds
 \\ =
\frac14\int_0^{1/2}
\left( g({\textstyle5/6})h(s) - g({\textstyle1/2})h(s)\right)^2 ds =
\frac14\int_0^{1/2} 4 h(s)^2 ds = 1/6.
\label{eq-sj-uselemma}
\end{multline}
Moreover, functions   $a(s)$ satisfying
$(\ref{eq-sjstar1})$ and  $(\ref{eq-sjstar2})$ transform \eqref{eq-sj-uselemma} into equality.

To establish an upper bound of the left-hand side of  \eqref{eq-sj-minmax16}, consider functions satisfying conditions (\ref{eq-sjstar1}) and  (\ref{eq-sjstar2}). Then for  $0\le t \le 5/6$ we have that
\begin{multline*}
\int_0^t (K(t,s) - a(s))^2 ds =
\int_0^t K(t,s)^2 ds =
\int_0^t g(t)^2 h(s)^2 ds  \\ =
g(t)^2 \int_0^t h(s)^2 ds \le
\int_0^{5/6} h(s)^2 ds = 1/6,
\end{multline*}
since  $a(s)=0$ on $[0,5/6]$ and  $g(t)^2 \leq 1$.
For  $5/6 < t \le 1$, we take into account the values  of $a,h$ and $g$ on this interval and obtain that
\begin{multline}
\int_0^t (K(t,s) - a(s))^2 ds %
=
\int_0^{5/6} (g(t) h(s) - a(s))^2 ds +
\int_{5/6}^t (g(t) h(s) - a(s))^2 ds  \\ =
\int_0^{5/6} g(t)^2 h(s)^2 ds +
\int_{5/6}^t a(s)^2 ds =
g(t)^2 \int_0^{5/6} h(s)^2 ds +
\int_{5/6}^t a(s)^2 ds  \\ \le
(6t-6)^2 \cdot 1/6 + 1/6 - 6(1-t)^2 = 1/6.
\label{eq-sj6}
\end{multline}
Hence, if function $a$ satisfies
(\ref{eq-sjstar1}) and  (\ref{eq-sjstar2}), we have that
\begin{gather*}
\sup_{t\in[0,1]} \int_0^t (K(t,s) - a(s))^2 ds \le 1/6.
\end{gather*}
Summing up, we obtain \eqref{eq-sj-minmax16}.

Now we prove that any minimizing function $a$ satisfies $(\ref{eq-sjstar1})$ and  $(\ref{eq-sjstar2})$.

Indeed, let
\[
\sup_{t\in[0,1]} \int_0^t (K(t,s) - a(s))^2 ds = 1/6.
\]
Then inequality (\ref{eq-sj-uselemma}) is transformed into equality, therefore
\begin{equation}\label{oj3}
\sup_{t\in[0,1]} \int_0^t (K(t,s) - a(s))^2 ds =
\frac14 \int_0^{1/2}
\left( K({\textstyle5/6},s) - K({\textstyle1/2},s)\right)^2 ds .
\end{equation}
It follows from \eqref{oj3} and from the 2nd part of Lemma~\ref{lem_K_ineq}  that
\[
a(s) = \frac12 (K({\textstyle5/6},s) + K({\textstyle1/2},s))
= \frac12(g({\textstyle5/6}) + g({\textstyle1/2}))\,
h(s) = 0
\]
a.e. on $[0, 1/2]$ because $g(1/2)=1$, $g(5/6)=-1$;
we obtain also the equality
\[
a(s) = K({\textstyle5/6},s) = g({\textstyle5/6}) h(s) = 0
\]
a.e. on $[1/2, 5/6]$ because  $h(s)=0$ for  $s\ge1/2$.
Therefore, function $a$ satisfies condition  (\ref{eq-sjstar1}).
Then we can get similarly to (\ref{eq-sj6}) that
\[
\int_0^t (K(t,s)-a(s))^2 ds =
\frac{(6t - 6)^2}{6} + \int_{5/6}^t a(s)^2 ds
\quad \mbox{for} \quad 5/6 < t \le 1,
\]
and it follows from inequality $\int_0^t (K(t,s) - a(s))^2 ds \le 1/6$
that
\[
\int_{5/6}^t a(s)^2 ds \le
1/6 - \frac{(6t - 6)^2}{6} = 1/6 - 6(1-t)^2
\quad \mbox{for} \quad 5/6 < t \le 1.
\]
It means that function $a$ satisfies condition (\ref{eq-sjstar2}).
\end{example}

\section{Uniqueness  of the minimizing function for the kernel connected to fractional Brownian motion}

Now we return to the main problem $(A)$ of approximation of fractional Brownian motion by martingales.

First we prove some simple but useful properties of the fractional Brownian kernel $K$ defined by \eqref{oj4}.
\begin{lemma}[Properties of the fractional Brownian kernel]\label{lem_K_prop}
1. Kernel  $K$ satisfies condition $(B)$.

2. Kernel  $K$ increases in the first argument and decreases in the second argument.

3. Kernel  $K$ is continuous on the set $[0,1]\times(0,1]$.

4. For any  $c>0$ and  $0<s\le t$ we have that $K(ct,cs) = c^\alpha K(t,s)$ with $\alpha=H-1/2.$
\end{lemma}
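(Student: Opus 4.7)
\medskip

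\noindent\textbf{Proof plan for Lemma~\ref{lem_K_prop}.}

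The plan is to reduce every claim to the substitution form
\[
K(t,s)=C_\alpha s^{\alpha}\int_{1}^{t/s} v^{\alpha}(v-1)^{\alpha-1}\,dv
\qquad (0<s\le t\le 1),
\]
obtained from \eqref{oj4} via $u=sv$, and then handle the four items one by one.

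For part~1, the integration property follows directly from the representation \eqref{fBm-repres}: the Itô isometry gives $\int_0^t K(t,s)^2\,ds=\M(B^H_t)^2=t^{2H}$, so $K(t,\cdot)\in L_2([0,t])$ and $\sup_{t\in[0,1]}\int_0^t K(t,s)^2\,ds=1<\infty$. (Alternatively, one can substitute $u=sv$ inside the square and estimate the resulting Beta-type integral, but appealing to the known variance of $B^H_t$ is much cleaner.)

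For part~2, monotonicity in $t$ is immediate from the original definition \eqref{oj4}, since $u^\alpha(u-s)^{\alpha-1}>0$ on $s<u\le t$, so enlarging $t$ enlarges the integral while the prefactor $C_\alpha s^{-\alpha}$ is independent of $t$. The decreasing property in $s$ is the one nontrivial computation of the lemma. Fix $t$ and write $K(t,s)=C_\alpha t^{\alpha}\,r^{-\alpha}G(r)$ with $r=t/s$ and
\[
G(r)=\int_1^r v^{\alpha}(v-1)^{\alpha-1}\,dv.
\]
Since $r$ decreases as $s$ increases, it suffices to show that $r\mapsto r^{-\alpha}G(r)$ is strictly increasing on $(1,\infty)$. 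Differentiating and multiplying by $r^{\alpha+1}>0$, this reduces to proving
\[
rG'(r)-\alpha G(r)=r^{\alpha+1}(r-1)^{\alpha-1}-\alpha\int_1^r v^{\alpha}(v-1)^{\alpha-1}\,dv>0.
\]
Bounding the integrand by $v^\alpha\le r^\alpha$ yields $\alpha G(r)\le r^\alpha(r-1)^\alpha$, whence the difference above is at least $r^{\alpha+1}(r-1)^{\alpha-1}-r^\alpha(r-1)^\alpha = r^\alpha(r-1)^{\alpha-1}>0$. This is the main obstacle, because naïve Leibniz differentiation of \eqref{oj4} in $s$ fails at the singular endpoint $u=s$, and only after the scaling substitution does the monotonicity become amenable to elementary calculus.

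For part~3, observe that in the substitution form $K(t,s)=C_\alpha s^\alpha G(t/s)$, the function $G$ is continuous on $[1,\infty)$ (it is the indefinite integral of a locally integrable function, since $\alpha>0$), and $s\mapsto s^\alpha$ is continuous on $(0,1]$. Thus $K$ is continuous on $\{0<s\le t\le 1\}$. On the complementary region $\{s>t\}\cap([0,1]\times(0,1])$, $K$ is identically zero. Continuity across the diagonal $s=t$ follows because $G(1)=0$, so $K(t,t)=0$ matches the value from the other side. This gives continuity on the whole of $[0,1]\times(0,1]$.

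For part~4, the scaling identity is verified most transparently from the substitution form:
\[
K(ct,cs)=C_\alpha(cs)^\alpha G\!\left(\frac{ct}{cs}\right)=c^\alpha\cdot C_\alpha s^\alpha G(t/s)=c^\alpha K(t,s),
\]
assuming $0<cs\le ct\le 1$. Equivalently, substituting $u=cu'$ in \eqref{oj4} produces the factor $c^{-\alpha}\cdot c^\alpha\cdot c^{\alpha-1}\cdot c=c^\alpha$, which gives the same conclusion directly from the original definition.
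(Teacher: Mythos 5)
Your proof is correct. For part 1 you do exactly what the paper does: invoke the representation \eqref{fBm-repres} and the It\^o isometry to get $\int_0^t K(t,s)^2\,ds=t^{2H}$, hence the supremum equals $1$. For parts 2--4 the paper offers no argument at all --- it simply says these ``follow directly from \eqref{oj4}'' --- whereas you supply genuine proofs via the substitution $u=sv$, which rewrites the kernel as $K(t,s)=C_\alpha s^\alpha G(t/s)$ with $G(r)=\int_1^r v^\alpha(v-1)^{\alpha-1}\,dv$. This is a real contribution rather than a restatement: monotonicity in $t$, continuity up to the diagonal (using $G(1)=0$ and integrability of $(v-1)^{\alpha-1}$ near $v=1$, valid since $\alpha>0$), and the self-similarity $K(ct,cs)=c^\alpha K(t,s)$ all drop out of this form, and --- most importantly --- the decrease in $s$, which is \emph{not} obvious from \eqref{oj4} (as $s$ grows, $s^{-\alpha}$ and the domain of integration shrink but $(u-s)^{\alpha-1}$ grows, and term-by-term differentiation produces a non-integrable singularity $(u-s)^{\alpha-2}$), is reduced to showing $r^{-\alpha}G(r)$ increases, which your estimate $\alpha G(r)\le r^\alpha(r-1)^\alpha$ settles cleanly: $rG'(r)-\alpha G(r)\ge r^\alpha(r-1)^{\alpha-1}>0$. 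All computations check out (e.g. $s^{-\alpha}\cdot s^{2\alpha}=s^\alpha$ after the substitution, and the factor $c^{-\alpha}c^\alpha c^{\alpha-1}c=c^\alpha$ in part 4). In short: part 1 matches the paper; parts 2--4 fill a gap the paper leaves to the reader, and your scaling-substitution route is the natural way to do it.
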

\begin{proof}
1. Since $K$ is the kernel of fractional Brownian motion, we have that
\[
t^{2H}=\M(B^H_t)^2 = \M\left(\int_0^t K(t,s) dW_s\right)^2 = \int_0^t K(t,s)^2ds.
\]
Therefore,  $\sup_{t\in [0,1]} \int_0^t K(t,s)^2 ds = 1$, and \eqref{eq_l2}. Other statements  follow directly from \eqref{oj4}.
\end{proof}

\begin{theorem}
For any function $a \in \mathfrak{M}_f$ there exists such function $\phi:[0,1]\rightarrow\R$  that  $s\le \phi(s)\le 1, s\in[0,1]$ and  $a(s)=K(\phi(s),s)$ a.e.
\end{theorem}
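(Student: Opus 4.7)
The plan is to show that any minimizer $a\in\mathfrak{M}_f$ satisfies the pointwise envelope $0\le a(s)\le K(1,s)$ for a.e.\ $s\in[0,1]$, after which the intermediate value theorem applied to $t\mapsto K(t,s)$ on $[s,1]$ produces the desired $\phi(s)$. Both envelope bounds come from a truncation argument combined with the uniqueness theorem from Section~4.

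For the lower bound, I would compare $a$ with its positive part $a^+$. Since $K(t,s)\ge 0$ everywhere, the pointwise inequality $(K(t,s)-a^+(s))^2\le(K(t,s)-a(s))^2$ holds, with equality on $\{a\ge 0\}$. Integrating over $[0,t]$ and taking the supremum in $t$ gives $f(a^+)\le f(a)$, so $a^+$ is also a minimizer; uniqueness then forces $a=a^+$ in $L_2([0,1])$, i.e.\ $a\ge 0$ a.e. For the upper bound, set $\tilde a := a\wedge K(1,\cdot)$. Wherever $a(s)>K(1,s)$, monotonicity of $K$ in the first argument (Lemma~\ref{lem_K_prop}, part~2) gives $K(t,s)\le K(1,s)<a(s)$, so $\tilde a(s)$ is closer to $K(t,s)$ than $a(s)$ is; the same integrate-and-sup step gives $f(\tilde a)\le f(a)$, and uniqueness yields $a\le K(1,\cdot)$ a.e.

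With the envelope in hand, the construction of $\phi$ is immediate: for each $s\in(0,1)$ at which the envelope holds, the map $t\mapsto K(t,s)$ is continuous on $[s,1]$ by Lemma~\ref{lem_K_prop}, part~3, vanishes at $t=s$ by direct inspection of \eqref{oj4}, and attains the value $K(1,s)$ at $t=1$. The intermediate value theorem then delivers $\phi(s)\in[s,1]$ with $K(\phi(s),s)=a(s)$; on the (null) exceptional set, and at the endpoints $s\in\{0,1\}$, one sets $\phi(s)=s$ or any value in $[s,1]$. Note that measurability of $\phi$ is not required by the statement, so no selection-theorem subtleties arise.

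The only nontrivial ingredient is the appeal to the uniqueness result from Section~4. Without it one would need to upgrade $f(a^+)\le f(a)$ and $f(\tilde a)\le f(a)$ to strict inequalities whenever the modifications are nontrivial, which is delicate: the pointwise gain at time $t$ vanishes for $t$ below the essential infimum of the set where $a$ violates the envelope, so the supremum defining $f$ could in principle be attained at such a small~$t$ and the truncation would not lower $f$ at all. Invoking the Section~4 uniqueness up front sidesteps this issue entirely and is why this is the cleanest route.
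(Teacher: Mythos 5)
Your proof is correct, and its skeleton coincides with the paper's: truncate $a$ to $\min(K(1,\cdot),\max(0,a))$, show the truncation does not increase $f$, conclude $0\le a(s)\le K(1,s)$ a.e., and then apply the intermediate value theorem to the continuous map $t\mapsto K(t,s)$ on $[s,1]$ (which vanishes at $t=s$) to produce $\phi$. The difference is in how the envelope is forced. The paper argues directly: it asserts that the truncation \emph{strictly} decreases $f$ whenever $a<0$ or $a>K(1,\cdot)$ on a set of positive measure, contradicting minimality. You instead deduce only $f(\tilde a)\le f(a)$, so $\tilde a\in\mathfrak{M}_f$, and invoke the uniqueness theorem to get $a=\tilde a$. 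Your closing remark about why the direct strictness claim is delicate is well taken -- pointwise strict improvement on a positive-measure set yields strictness of $g_{\tilde a}(t)<g_a(t)$ only for those $t$ lying above that set, and the supremum could a priori be attained at a smaller $t$; the paper's proof glosses over exactly this point, so your route is the more watertight of the two. The one thing you must check, since uniqueness is stated and proved \emph{after} this theorem in the paper, is that no circularity is introduced: this is fine, because the uniqueness proof rests only on Lemma~\ref{lem_sup_point} (the self-similarity/scaling argument) and the convexity of $\mathfrak{M}_f$, neither of which uses the present theorem or its corollary on nonnegativity. If one insists on preserving the paper's order of presentation, one would instead have to repair the strictness claim (e.g.\ by first showing every maximizer of $g_{\tilde a}$ lies above the set where $a$ violates the envelope), which is precisely the complication your argument avoids.
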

\begin{proof}
Let $a \in \mathcal{M}_f$.
Consider the function $b(s)=\min(K(1,s), \max(0, a(s)), s\in[0,1]$.
Since the kernel  $K$ is nonnegative, then  $$(a(s)-K(t,s))^2 \ge (\max(0, a(s))-K(t,s))^2, t,s\in [0,1] $$
and this inequality is strict on a set of positive Lebesgue measure if $a(s)<0$ on a set of positive Lebesgue measure.  Moreover, since the kernel  $K$ is increasing in the first argument, we have that
$$(a(s)-K(t,s))^2 \ge (\min(K(1,s), a(s))-K(t,s))^2, t,s\in [0,1],$$ and this inequality is strict on the set of positive Lebesgue measure if $a(s)>K(1,s)$ on a set of positive Lebesgue measure.
Therefore,  $f(b)\le f(a)$ and this inequality is strict if $a(s)<0$ or $a(s)>K(1,s)$ on a set of positive Lebesgue measure. Therefore,  $$0=K(s,s)\le a(s)\le K(1, s), s\in[0,1].$$ Since the kernel  $K$ is continuous in the first argument, there exists a function  $s\le\phi(s)\le 1,s\in[0,1]$, such that  $a(s)=K(\phi(s),s)$.
\end{proof}
\begin{corollary} Functions in the set $\mathfrak{M}_f$ are nonnegative.
\end{corollary}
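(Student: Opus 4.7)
The statement follows essentially immediately from the theorem just proved, so the plan is very short. The theorem asserts that any $a\in\mathfrak{M}_f$ admits a representation $a(s)=K(\phi(s),s)$ a.e.\ for some function $\phi:[0,1]\to\R$ with $s\le\phi(s)\le 1$. So to deduce the corollary, the only thing I need is the nonnegativity of the fractional Brownian kernel itself.

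My plan is therefore as follows. First, I would fix an arbitrary $a\in\mathfrak{M}_f$ and invoke the preceding theorem to write $a(s)=K(\phi(s),s)$ a.e.\ on $[0,1]$, with $\phi(s)\in[s,1]$. Second, I would verify that $K(t,s)\ge 0$ for all $0<s<t\le 1$; this is read off directly from the formula \eqref{oj4}, since $C_\alpha>0$, $s^{-\alpha}>0$, and the integrand $u^\alpha(u-s)^{\alpha-1}$ is strictly positive on $(s,t)$ for $\alpha=H-1/2\in(0,1/2)$. (This nonnegativity has already been used explicitly in the proof of the theorem, so it can simply be cited.) Combining these two observations gives $a(s)=K(\phi(s),s)\ge 0$ a.e.

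There is no real obstacle here; the work has all been done in establishing the theorem, and the corollary is just the remark that the representing kernel takes nonnegative values. The only thing to be careful about is the measure-zero set on which the representation $a(s)=K(\phi(s),s)$ might fail and the trivial case $\phi(s)=s$ (where $K(s,s)=0$ by \eqref{oj4}), both of which are harmless for the a.e.\ nonnegativity claim.
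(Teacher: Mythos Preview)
Your proposal is correct and is exactly the intended argument: the paper states this as an immediate corollary of the preceding theorem (with no separate proof), relying on the representation $a(s)=K(\phi(s),s)$ a.e.\ together with the nonnegativity of the kernel $K$ from \eqref{oj4}. There is nothing to add.
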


Now we are in position to establish the uniqueness of minimizing function for the principal functional corresponding to the kernel of fractional Brownian motion. In order to do this, prove at first the auxiliary statement concerning any minimizing function for this functional. For $x\in L_2([0,1])$, denote
$$g_x(t)=\left(
\int_0^t (K(t,s)-x(s))^2 \, d s\right)^{1/2}.$$
Then we have from the definition of the principal functional $f$ that  $f(x)=\sup_{t\in[0,1]}g_x(t).$
It follows from Lemma \ref{lem_K_prop} that
 $g_x\in C[0,T]$ for any $x\in L_2[0,T]$.
Using self-similarity property  4) of the kernel $K$, it is easy to see that
\begin{equation}\label{selfsim}
g_a(t) =  c^{\alpha+1/2}g_{c^{-\alpha} a(c\cdot)}(t/c).
\end{equation}

\begin{lemma}\label{lem_sup_point} Let $a \in \mathfrak{M}_f$.
Then the maximal value of  $g_a$ is attained at the point $1$, i.e. $f(a)=g_a(1)$.
\end{lemma}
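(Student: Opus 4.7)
The plan is to argue by contradiction. Suppose $g_a(1) < M := f(a)$. Since $g_a \in C[0,1]$ the set $E = \{t \in [0,1] : g_a(t) = M\}$ is closed and nonempty, so $t_0 := \max E$ is well defined, and our hypothesis forces $t_0 < 1$. I would then exhibit a competitor $a_\lambda \in \LT$ with $f(a_\lambda) < M$, contradicting $a \in \mathfrak{M}_f$.

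The construction uses the self-similarity relation \eqref{selfsim} with $c = 1/t_0 > 1$: set $\tilde a(s) := t_0^\alpha a(s/t_0)$ on $[0,t_0]$, so that \eqref{selfsim} yields $g_{\tilde a}(u) = t_0^{\alpha+1/2} g_a(u/t_0)$ for $u \in [0,t_0]$, whence $\sup_{u \in [0,t_0]} g_{\tilde a}(u) = t_0^{\alpha+1/2} M < M$. I would then glue: define $b \in \LT$ by $b = \tilde a$ on $[0,t_0]$ and $b = a$ on $(t_0,1]$, form $a_\lambda := (1-\lambda)a + \lambda b$, and exploit the pointwise inequality $((1-\lambda)p + \lambda q)^2 \le (1-\lambda)p^2 + \lambda q^2$ inside the $L_2$-norm to obtain the key bound
\[
g_{a_\lambda}(t)^2 \le (1-\lambda) g_a(t)^2 + \lambda g_b(t)^2 \qquad \text{for all } t \in [0,1].
\]
For $t \in [0,t_0]$ this gives $g_{a_\lambda}(t)^2 \le M^2\bigl(1 - \lambda(1 - t_0^{2\alpha+1})\bigr) < M^2$ for any $\lambda \in (0,1]$, which handles the first regime.

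The delicate step is the regime $t \in (t_0,1]$: the slack $M^2 - g_a(t)^2$ is positive but can be arbitrarily small as $t \downarrow t_0$. I would rewrite the key estimate as $g_{a_\lambda}(t)^2 \le g_a(t)^2 + \lambda (I_1(t) - J_1(t))$, where $I_1(t) = \int_0^{t_0}(K(t,s) - \tilde a(s))^2 \, ds$ and $J_1(t) = \int_0^{t_0}(K(t,s) - a(s))^2 \, ds$, and observe that at $t = t_0$ one has $I_1(t_0) - J_1(t_0) = (t_0^{2\alpha+1} - 1) M^2 < 0$. By continuity there exists $\eps > 0$ with $I_1(t) \le J_1(t)$ on $[t_0, t_0 + \eps]$; on this right-neighborhood $g_{a_\lambda}(t) \le g_a(t) < M$ independently of $\lambda$, sidestepping the vanishing slack. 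On the complementary compact set $[t_0 + \eps, 1]$ the slack $M^2 - g_a(t)^2$ is bounded below by some $\eta > 0$ and $I_1(t) - J_1(t)$ is bounded above, so taking $\lambda > 0$ small enough absorbs the perturbation. Combining the two regimes yields $f(a_\lambda) < M$, the desired contradiction.
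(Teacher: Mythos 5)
Your strategy is sound and reaches the right conclusion, but it is genuinely different from the paper's argument, which is much shorter: the paper extends $a$ by zero to an interval $[1,c]$ with $c>1$ chosen so close to $1$ that $g_a<f(a)$ on $[1,c]$ (possible by continuity of $g_a$ and the hypothesis $g_a(1)<f(a)$), and then rescales the whole of $[0,c]$ down to $[0,1]$ via the self-similarity relation \eqref{selfsim}; the rescaled function $b(t)=c^{-\alpha}a(tc)$ satisfies $f(b)=c^{-\alpha-1/2}f(a)<f(a)$, an immediate contradiction. You instead compress $[0,1]$ into $[0,t_0]$ (where $t_0$ is the last maximum point of $g_a$), glue with $a$ on $(t_0,1]$, and perturb by a convex combination. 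Your route is longer but has the virtue of never evaluating $g_a(t)$ or $K(t,s)$ for $t>1$, which the paper's proof does implicitly and without justifying that the extension preserves continuity of $g_a$ at $t=1^+$; on the other hand it needs the extra convexity bound $g_{a_\lambda}(t)^2\le(1-\lambda)g_a(t)^2+\lambda g_b(t)^2$ and a three-regime case analysis.

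Two points in your write-up need attention, though both are easily repaired. First, $I_1(t_0)=g_{\tilde a}(t_0)^2=t_0^{2\alpha+1}g_a(1)^2$, not $t_0^{2\alpha+1}M^2$; under the contradiction hypothesis $g_a(1)<M$ this only strengthens the inequality $I_1(t_0)-J_1(t_0)<\bigl(t_0^{2\alpha+1}-1\bigr)M^2<0$, so the sign conclusion survives, but the stated equality is wrong. Second, and more substantively: on the regime $(t_0,t_0+\eps]$ your bound is the pointwise inequality $g_{a_\lambda}(t)\le g_a(t)<M$, but since $g_a(t)\to g_a(t_0)=M$ as $t\downarrow t_0$, the supremum of this bound over the regime equals $M$, so ``combining the regimes'' as written only yields $f(a_\lambda)\le M$. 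You must either (a) invoke the continuity of $g_{a_\lambda}$ (noted in the paper just before the lemma) so that $f(a_\lambda)=\max_t g_{a_\lambda}(t)$ is attained at some $t^*$ where one of your strict pointwise inequalities applies, or (b) upgrade $I_1(t)\le J_1(t)$ to $I_1(t)-J_1(t)\le-\rho<0$ on $[t_0,t_0+\eps]$ (which continuity gives you for free, since $I_1(t_0)-J_1(t_0)<0$ strictly), yielding the uniform bound $g_{a_\lambda}(t)^2\le M^2-\lambda\rho$ there. With either fix the proof is complete.
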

\begin{proof}
Set $a(t)=0$ for $t>1$. Suppose that $g_a(1) < f(a)$. Since $g_a(t)$ is continuous in $t$, there exists such $c>1$ that $g_{a}(t)<f_{a}$ for $t\in[1,c]$. It means that $\max_{t\in[0,c]} g_{a}(t) = f_{a}$. Set $b(t) = c^{-\alpha}a(tc)$. It follows from equation  \eqref{selfsim} that $g_b(t) = c^{-1/2-\alpha} g_{a}(tc)$, $t\in[0,1]$. We get immediately that  $f(b)= c^{-\alpha-1/2} f(a)< f(a)$, which leads to a contradiction.
\end{proof}

%
%

\begin{theorem}[Uniqueness of minimizing function]
For the  principal functional $f$ defined by \eqref{eq-fx} with fractional Brownian kernel $K$ from \eqref{oj4},
 there is a unique minimizing function.
\end{theorem}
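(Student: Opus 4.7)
The plan is to derive uniqueness via the strict convexity of the squared $L_2$-norm together with Lemma~\ref{lem_sup_point}. Assume for contradiction that $a_1, a_2 \in \mathfrak{M}_f$ with $a_1 \neq a_2$ in $L_2([0,1])$. Since $\mathfrak{M}_f$ is convex (this follows immediately from Lemma~\ref{lem:f-convex}), the midpoint $a := (a_1+a_2)/2$ is again a minimizer, so $f(a_1) = f(a_2) = f(a) = m$, where $m := \min_{x \in L_2([0,1])} f(x)$.

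The key step is to exploit Lemma~\ref{lem_sup_point}, which asserts that for every minimizer $a \in \mathfrak{M}_f$ the supremum defining $f(a)$ is attained at $t=1$. Hence
\[
g_{a_1}(1) = g_{a_2}(1) = g_a(1) = m.
\]
Applying the parallelogram identity in the Hilbert space $L_2([0,1])$ to the vectors $K(1,\cdot) - a_1$ and $K(1,\cdot) - a_2$ yields
\[
4\,g_a(1)^2 + \|a_1 - a_2\|_{L_2([0,1])}^2 = 2\,g_{a_1}(1)^2 + 2\,g_{a_2}(1)^2.
\]
Substituting $g_a(1) = g_{a_1}(1) = g_{a_2}(1) = m$ forces $\|a_1 - a_2\|_{L_2([0,1])}^2 = 0$, contradicting $a_1 \neq a_2$. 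This completes the proof.

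The principal obstacle of the whole argument has already been overcome in the preparatory Lemma~\ref{lem_sup_point}: pinning the worst-case $t$ down to the single point $t=1$ is what converts the nonsmooth sup-of-norms functional (which, as the Section~3 example shows, can fail to be strictly convex on $\mathfrak{M}_f$) into a strictly convex one when restricted to minimizers. The parallelogram identity then does the rest. The crucial input into Lemma~\ref{lem_sup_point} is the self-similarity property~4 of the fractional Brownian kernel from Lemma~\ref{lem_K_prop}, and it is exactly this feature that distinguishes the present kernel from the degenerate example of Section~3 and accounts for the sharper uniqueness conclusion here.
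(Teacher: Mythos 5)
Your proof is correct and follows essentially the same route as the paper: both rest on Lemma~\ref{lem_sup_point} to reduce the problem to the observation that on $\mathfrak{M}_f$ the functional coincides with the $L_2$-distance to $K(1,\cdot)$, and then conclude by strict convexity of the Hilbert-space norm. The only cosmetic difference is the final step, where you use the parallelogram identity while the paper uses the equality case of the triangle inequality (two vectors of equal norm that are nonnegative multiples of each other must coincide); your variant is, if anything, slightly more direct.
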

\begin{proof}
Denote $M_f$ the minimal value of functional $f$. Recall that the set $\mathfrak{M}_f$
is nonempty and convex.
Let $\hat{K}(s)=K(1,s),s\in[0,1].$
It follows from Lemma \ref{lem_sup_point} that for any function  $x\in \mathfrak{M}_f$ the following equality holds:
$$f(x)=\left(\int_{0}^1 (x(s) - K(1,s))^2 ds\right)^{1/2}=\|x-\hat{K}\|_{L_2([0,1])}.$$

For any $x,y\in \mathfrak{M}_f, \alpha\in (0,1)$ we have that
\begin{multline*}
M_f=f(\alpha x + (1-\alpha)y)=\|\alpha x + (1-\alpha)y-L\|_{L_2([0,1])}\le \alpha \| x - L \|_{L_2([0,1])}\\ + (1-\alpha)\|y-L\|_{L_2([0,1])}
=\alpha f(x) + (1-\alpha) f(y) = M_f.
\end{multline*}
For arbitrary vectors $x$ and $y$ in a Hilbert space the equality $\|x+y\|=\|x\|+\|y\|$ implies that  $x$ and $y$ differ by
a non-negative multiple.
Therefore, the functions  $\hat{K}-x$ and $\hat{K}-y$ differ by a non-negative multiple, but since $\|\hat{K}-x\|_{L_2([0,1])}=\|\hat{K}-y\|_{L_2([0,1])}$, we have $\hat{K}-x=\hat{K}-y$. Therefore, $x=y$, as required.
\end{proof}

\section{Representation of the minimizing function}

In this section we consider principal functional $f$ corresponding to fractional Brownian motion and establish
that the minimizing function has some special form. We start by proving several auxiliary results of the fractional Brownian kernel and the minimizing function.

\subsection{Auxiliary results}

\begin{lemma}\label{lem-LF}
The fractional Brownian kernel for any $0 \le t \le 1$ satisfies
\begin{equation}\label{eq-lem-LF}
\int_0^t (K(1,s)-K(t,s))^2 ds + \int_t^1 K(1,s)^2 ds = (1-t)^{2H} .
\end{equation}
\end{lemma}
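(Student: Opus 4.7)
The plan is to recognize that \eqref{eq-lem-LF} is nothing but an Itô-isometry restatement of the fact that $B^H$ has stationary increments with $\M(B^H_1 - B^H_t)^2 = (1-t)^{2H}$; no direct manipulation of the complicated kernel \eqref{oj4} is needed.

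The first step is to recall from \eqref{fBm-repres} that $B^H_t = \int_0^1 K(t,s)\,dW_s$, where the upper limit can be taken to be $1$ because $K(t,s) = 0$ for $s > t$ (by the indicator in \eqref{oj4}). In particular, writing both $B^H_1$ and $B^H_t$ as stochastic integrals over $[0,1]$ with respect to the same Wiener process gives
\[
B^H_1 - B^H_t = \int_0^1 \bigl(K(1,s) - K(t,s)\bigr)\,dW_s.
\]

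Next, I would apply the Itô isometry to obtain
\[
\M\bigl(B^H_1 - B^H_t\bigr)^2 = \int_0^1 \bigl(K(1,s) - K(t,s)\bigr)^2 ds,
\]
and split the integral at $s = t$. On $(t,1]$ we have $K(t,s) = 0$, so the second piece collapses to $\int_t^1 K(1,s)^2\,ds$, yielding the left-hand side of \eqref{eq-lem-LF}.

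Finally, compute the left-hand side $\M(B^H_1 - B^H_t)^2$ from the covariance $\M[B^H_u B^H_v] = \frac12(u^{2H} + v^{2H} - |u-v|^{2H})$: one gets $1 + t^{2H} - (1 + t^{2H} - (1-t)^{2H}) = (1-t)^{2H}$. Combining with the previous step yields \eqref{eq-lem-LF}. There is no real obstacle; the only point to watch is that the representation \eqref{fBm-repres} is written as an integral over $[0,t]$, and one must use the support of $K(t,\cdot)$ to extend it to $[0,1]$ before subtracting.
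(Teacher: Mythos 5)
Your proposal is correct and matches the paper's proof exactly: the paper also observes that the left-hand side of \eqref{eq-lem-LF} equals $\M(B_1^H-B_t^H)^2=(1-t)^{2H}$ via the representation \eqref{fBm-repres} and the It\^o isometry. You have merely spelled out the intermediate steps (splitting the integral at $s=t$ and computing the increment variance from the covariance) that the paper leaves implicit.
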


\begin{proof}
It follows from \eqref{fBm-repres} that the left-hand side of \eqref{eq-lem-LF} is equal to $\M (B_1^H-B_t^H)^2=(1-t)^{2H}$.
\end{proof}

The following statement will be essentially generalized in what follows. However, we prove it because its proof clarifies the main ideas and, moreover, it has the interesting consequences concerning the properties of the minimizing function. In the remainder of this section $a=a(s), s \in [0,1]$ denotes the minimizing function, i.e. the unique element of $\mathfrak{M}_f$.

\begin{lemma}\label{cikavetverdz}
Let $t^* = \sup\{t\in(0,1): g_a(t) = f(a)\}$ $(t^*=0$ if this set is empty$)$.
If $t^*<1$, then $a(t) = K(1,t)$ for a.e. $t\in [t^*,1]$.
\end{lemma}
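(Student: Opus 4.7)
The plan is to argue by contradiction: assume $a(s)\neq K(1,s)$ on a set $A\subset[t^*,1]$ of positive Lebesgue measure. By the corollary and theorem immediately preceding the uniqueness theorem (which give $0\leq a\leq K(1,\cdot)$ a.e.), $K(1,s)-a(s)\geq 0$. Using $|A|=\lim_{t_0\downarrow t^*}|A\cap[t_0,1]|$, I pick $t_0\in(t^*,1)$ with $|A\cap[t_0,1]|>0$ and set $h:=(K(1,\cdot)-a)\mathbf{1}_{[t_0,1]}$, a nonzero element of $L_2([0,1])$. My goal is to show that for all sufficiently small $\epsilon>0$ the perturbation $a_\epsilon:=a+\epsilon h$ satisfies $f(a_\epsilon)\leq f(a)$; since $a_\epsilon\neq a$, the uniqueness of the minimizer then yields a contradiction.

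For $t\in[0,t_0]$ the support of $h$ gives $g_{a_\epsilon}(t)=g_a(t)\leq f(a)$. For $t\in[t_0,1]$, direct expansion yields $g_{a_\epsilon}(t)^2=g_a(t)^2+\Delta(t,\epsilon)$ with
\[
\Delta(t,\epsilon)=-2\epsilon\!\int_{t_0}^t(K(t,s)-a(s))(K(1,s)-a(s))\,ds+\epsilon^2\!\int_{t_0}^t(K(1,s)-a(s))^2\,ds,
\]
so in particular $\Delta(1,\epsilon)=-\epsilon(2-\epsilon)Q_{t_0}$ where $Q_{t_0}:=\int_{t_0}^1(K(1,s)-a(s))^2\,ds>0$. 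I split $[t_0,1]$ into $[t_0,1-\eta_0]$ and $[1-\eta_0,1]$ for some $\eta_0>0$ to be chosen independently of $\epsilon$.

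On the inner compact $[t_0,1-\eta_0]$ the residual $R(t):=f(a)^2-g_a(t)^2$ is continuous and, since $t_0>t^*$, strictly positive, so $R_{\min}>0$. A uniform bound $|\Delta(t,\epsilon)|\leq C_2\epsilon$ (by Cauchy--Schwarz and $g_a\leq f(a)$) then gives $g_{a_\epsilon}(t)^2\leq f(a)^2$ for $\epsilon<R_{\min}/C_2$. On $[1-\eta_0,1]$ the residual may vanish, so one must bound $\Delta(t,\epsilon)$ directly: using the H\"older estimate $\|K(1,\cdot)-K(t,\cdot)\|_{L_2[0,t]}\leq(1-t)^H$ and $\int_t^1 K(1,s)^2\,ds\leq(1-t)^{2H}$ from Lemma~\ref{lem-LF}, together with sign analysis ($I_1(1)-I_1(t)\geq 0$, $I_2(t)-I_2(1)\leq 0$), one obtains $\Delta(t,\epsilon)\leq\Delta(1,\epsilon)+C_1\epsilon(1-t)^H$ for an $\epsilon$-independent constant $C_1$. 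Choosing $\eta_0:=(Q_{t_0}/C_1)^{1/H}$ forces $\Delta(t,\epsilon)\leq 0$ on $[1-\eta_0,1]$ for every $\epsilon\in(0,1]$, and hence $g_{a_\epsilon}(t)^2\leq g_a(t)^2\leq f(a)^2$ there.

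The main obstacle is precisely the behavior near $t=1$: both the residual $R(t)$ and the ``useful'' negative contribution $\Delta(1,\epsilon)$ vanish (as $t\to 1$ and $\epsilon\to 0$ respectively), so a naive compactness argument does not suffice. The key observation is that the H\"older-type estimate from Lemma~\ref{lem-LF} makes $\Delta(t,\epsilon)$ deviate from $\Delta(1,\epsilon)$ only by $O(\epsilon(1-t)^H)$, which is strictly dominated by $|\Delta(1,\epsilon)|\asymp\epsilon$ in a fixed (in $\epsilon$) right-neighborhood of $1$; this permits the two subregions to be joined, completing the contradiction.
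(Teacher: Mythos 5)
Your proof is correct, but it follows a genuinely different route from the paper's. The paper fixes $t_1\in(t^*,1]$ and tests against an \emph{arbitrary} $h\in L_2([0,1])$: assuming $\int_{t_1}^1 h(s)(a(s)-K(1,s))\,ds=\kappa>0$, it perturbs $b_\delta=a-\delta h 1_{[t_1,1]}$, handles the delicate region near $t=1$ simply by choosing $t_2$ so close to $1$ that the first-order term $\int_{t_1}^{t}h(s)(a(s)-K(t,s))\,ds$ stays above $\kappa/2$ on $[t_2,1]$ by continuity, and derives the contradiction from Lemma~\ref{lem_sup_point}: the perturbed function is still a minimizer yet satisfies $g_{b_\delta}(1)<f(b_\delta)$. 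This is the standard first-variation argument and needs neither uniqueness of the minimizer, nor the bounds $0\le a\le K(1,\cdot)$, nor Lemma~\ref{lem-LF}. You instead perturb in the single explicit descent direction $h=(K(1,\cdot)-a)1_{[t_0,1]}$ and conclude via the uniqueness theorem, which forces you to control the region near $t=1$ quantitatively through the H\"older-type estimate $\|K(1,\cdot)-K(t,\cdot)\|_{L_2[0,t]}\le(1-t)^{H}$ and $\int_t^1K(1,s)^2ds\le(1-t)^{2H}$ from Lemma~\ref{lem-LF}, together with the a priori bounds $0\le a\le K(1,\cdot)$ from Section~4; I checked the sign analysis and the choice $\eta_0=(Q_{t_0}/C_1)^{1/H}$, and they work. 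What each approach buys: yours is an explicit, quantitative descent argument that makes the mechanism near $t=1$ visible, but it leans on more prior machinery (uniqueness, the pointwise bounds on minimizers, Lemma~\ref{lem-LF}); the paper's is lighter and more general, since testing against all $h$ directly yields $a=K(1,\cdot)$ a.e.\ on $[t_1,1]$ without needing to know in advance which direction decreases $f$, and its continuity trick near $t=1$ replaces your H\"older estimate at essentially no cost.
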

\begin{proof}
Fix some $t_1\in(t^*,1]$ and prove that for any  $h\in L_2([0,1])$ the following equality  holds: $$\int_{t_1}^1 h(s) \left(a(s)-K(1,s)\right) ds =0.$$ Evidently, proof follows immediately from this statement.

Assume the contrary. Then, without loss of generality, there exists such $h\in L_2([0,1])$ that $$\int_{t_1}^1 h(s)\big(a(s)-K(1,s)\big) ds =:\kappa>0.$$ It follows from the continuity of the last integral w.r.t. upper bound that for some $t_2\in(t_1,1]$ we have  $$\int_{t_1}^t h(s) \left(a(s)-K(t,s)\right) ds \ge \kappa/2$$ for any
 $t\in[t_2,1]$. Note also that our assumption implies that  $$m:=\max_{s\in[t_1,t_2]} g_a(s)< f(a).$$

Consider now   $b_\delta (t) = a(t) - \delta h(t)1_{[t_1,1]}(t)$ for $\delta>0$.
We have that  $g_{b_\delta}(t) = g_a(t)$ for $t\in[0,t_1]$, and
\begin{gather*}
g_{b_\delta}(t)^2 = g_a(t)^2 - 2\delta\int_{t_1}^t h(s)(a(s)-K(t,s))\,ds + \delta^2 \int_{t_1}^t h(s)^2 ds
\end{gather*}
for $t > t_1$.
For $t\in(t_1,t_2]$ the following inequality holds,
\begin{gather*}
g_{b_\delta}(t)^2 \le m^2 - 2\delta \int_{t_1}^t h(s)\big(a(s)-K(t,s)\big) ds + \delta^2 \int_{t_1}^t h(s)^2 ds\le m^2 + C\delta
\end{gather*}
with the constant $C$ that does not depend on $t,\delta$. Then for   sufficiently small $\delta>0$ we have that
$g_{b_\delta}(t)< f(a)$
for any $t\in(t_1,t_2]$.

Furthermore, if $t\in(t_2,1]$, then
\begin{gather*}
g_{b_\delta}(t)^2 \le f(a)^2 - 2\delta \int_{t_1}^t h(s)\big(a(s)-K(t,s)\big) ds + \delta^2 \int_{t_1}^t h(s)^2 ds\le\\
\le f(a)^2 -\kappa \delta + \delta^2 \int_0^1 h(s)^2 ds.
\end{gather*}
Again, for   sufficiently small $\delta>0$ and any  $t\in(t_2,1]$ we have that
$g_{b_\delta}(t)< f(a)$.
Therefore, for   sufficiently small  $\delta>0$ we get that $f(b_\delta)= f(a)$ and  $g_{b_\delta}(1)<f(a) = f(b_\delta)$. We obtain the contradiction with  Lemma ~\ref{lem_sup_point} whence the proof follows.
\end{proof}
\begin{corollary}
 There exists such point $t\in(0,1)$ that
$g_a(t)=f(a)$.
\end{corollary}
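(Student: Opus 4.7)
The plan is to argue by contradiction, using Lemma~\ref{cikavetverdz} together with Lemma~\ref{lem_sup_point}. Suppose the set $\{t\in(0,1): g_a(t)=f(a)\}$ is empty. Then by the convention defining $t^*$ in Lemma~\ref{cikavetverdz}, we have $t^*=0$, which in particular satisfies $t^*<1$. Applying that lemma, I conclude that $a(t)=K(1,t)$ for a.e.\ $t\in[0,1]$.

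Next I evaluate $g_a$ at the endpoint: since $a$ coincides with $K(1,\cdot)$ a.e.,
\[
g_a(1)^2=\int_0^1 (K(1,s)-a(s))^2\,ds=0.
\]
By Lemma~\ref{lem_sup_point} the maximum of $g_a$ is attained at $1$, so $f(a)=g_a(1)=0$, which means the minimal value $M_f$ of the functional equals zero.

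It remains to rule this out by exhibiting a strictly positive lower bound for $M_f$. Applying Lemma~\ref{lem_K_ineq} with $t_1=1/2$ and $t_2=1$ gives
\[
f(a)^2\ge \frac14\int_0^{1/2}(K(1,s)-K(1/2,s))^2\,ds.
\]
The integrand is strictly positive on $(0,1/2)$: from the explicit expression \eqref{oj4} we have
\[
K(1,s)-K(1/2,s)=C_\alpha s^{-\alpha}\int_{1/2}^{1} u^\alpha (u-s)^{\alpha-1}\,du>0
\]
for every $s\in(0,1/2)$, so the integral is strictly positive, yielding $M_f=f(a)>0$ and contradicting the preceding paragraph.

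The argument is short and modular; no substantial obstacle is expected, since both ingredients (Lemma~\ref{cikavetverdz} to push $a$ onto $K(1,\cdot)$ past $t^*$, and the quantitative lower bound from Lemma~\ref{lem_K_ineq}) are already available. The only mild point to verify is the strict positivity of $K(1,s)-K(1/2,s)$, which is immediate from the kernel formula \eqref{oj4}.
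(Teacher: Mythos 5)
Your proof is correct and follows essentially the same route as the paper: assume the set is empty, invoke Lemma~\ref{cikavetverdz} with $t^*=0$ to get $a=K(1,\cdot)$ a.e., deduce $g_a(1)=0$, and contradict Lemma~\ref{lem_sup_point}. The only difference is that you make the final contradiction explicit by producing a positive lower bound for $f(a)$ via Lemma~\ref{lem_K_ineq}, where the paper leaves this step implicit; this is a harmless (indeed welcome) elaboration, not a different argument.
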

\begin{proof}
Assuming the  contrary, we get from Lemma~\ref{cikavetverdz} that $a(t) =K(1,t)$ for a.a. $t\in[0,1]$. However, in this case  $g_a(1)=0$, which contradicts Lemma~\ref{lem_sup_point}.
\end{proof}
Denote $\mathfrak{G}_a =\{t\in[0,1]: g_a(t) =f(a)\}$, the set of the maximal points of the function   $g_a$.
\begin{lemma}\label{propersep}
Let point $u\in[0,1)$ is such that $g_a(u)<f(a)$.
Then there does not exist function $h\in L_2([0,1])$ such that for any  $t\in \mathfrak{G}_{a}\cap (u,1]$ the inequality $\int_{u}^t h(s)\big(a(s)-K(t,s)\big)ds >0$ holds.
\end{lemma}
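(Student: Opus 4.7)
The plan is to argue by contradiction, extending the perturbation technique of Lemma~\ref{cikavetverdz}: assume such an $h$ exists and consider $b_\delta := a - \delta h\mathbf{1}_{[u,1]}$ for small $\delta>0$. A direct expansion yields
\[
g_{b_\delta}(t)^2 - g_a(t)^2 = -2\delta\,\phi(t) + \delta^2 \int_u^t h(s)^2\,ds, \qquad t\in[u,1],
\]
with $\phi(t):=\int_u^t h(s)(a(s)-K(t,s))\,ds$, and $g_{b_\delta}=g_a$ on $[0,u]$. I would first verify that $\phi$ is continuous on $[u,1]$: the piece $\int_u^t h(s)a(s)\,ds$ is obviously continuous, while continuity of $\int_u^t h(s)K(t,s)\,ds$ follows from Cauchy--Schwarz together with the $L_2$-continuity of $K(\cdot,s)$, which is a consequence of the identity $\int_0^t(K(t',s)-K(t,s))^2\,ds + \int_t^{t'}K(t',s)^2\,ds = (t'-t)^{2H}$ used in Lemma~\ref{lem-LF}.

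The geometric step is to exploit $\mathfrak{G}_a$. Since $g_a(u)<f(a)$, the point $u$ lies outside the closed set $\mathfrak{G}_a$, so $\mathfrak{G}_a\cap(u,1]=\mathfrak{G}_a\cap[u,1]$ is a compact subset of $(u,1]$; by Lemma~\ref{lem_sup_point} it contains $1$ and is in particular nonempty. The standing assumption gives $\phi>0$ on this compact set, whence $\phi\ge c$ there for some $c>0$, and by continuity $V:=\{t\in[u,1]:\phi(t)>c/2\}$ is an open neighborhood of $\mathfrak{G}_a\cap[u,1]$ in $[u,1]$. On the compact complement $[u,1]\setminus V$ the function $g_a$ is strictly below $f(a)$, hence bounded above by $f(a)-\eta$ for some $\eta>0$.

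Combining these ingredients: for all sufficiently small $\delta$ one gets $g_{b_\delta}(t)<f(a)$ for every $t\in(u,1]$ (the linear drop $-\delta c$ dominates on $V$, and the $\eta$-gap absorbs the $O(\delta)$ perturbation on $[u,1]\setminus V$), while trivially $g_{b_\delta}\le f(a)$ on $[0,u]$. Thus $f(b_\delta)\le f(a)$, so by minimality $b_\delta\in\mathfrak{M}_f$; but $1\in V$ gives $g_{b_\delta}(1)<f(a)=f(b_\delta)$, contradicting Lemma~\ref{lem_sup_point}. The main obstacle is the uniformity: because $\mathfrak{G}_a$ can be an arbitrary closed subset of $[0,1]$ (in contrast with the single-point situation of Lemma~\ref{cikavetverdz}), one cannot argue pointwise, and the separation of $\mathfrak{G}_a\cap[u,1]$ from the region where $\phi$ may be negative, achieved through the continuous function $\phi$ and the compactness argument above, is what makes a single $\delta$ work simultaneously everywhere.
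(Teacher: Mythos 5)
Your proposal is correct and follows essentially the same route as the paper: both argue by contradiction with the perturbation $b_\delta=a-\delta h\mathbf{1}_{(u,1]}$, use compactness of $\mathfrak{G}_a\cap(u,1]$ to get a uniform positive lower bound on $\phi$ on an open neighborhood (your superlevel set $V$ plays exactly the role of the paper's $\eps$-neighborhood $\mathfrak{B}_\eps$), bound $g_a$ strictly below $f(a)$ on the compact complement, and then contradict Lemma~\ref{lem_sup_point} at $t=1$. Your explicit verification of the continuity of $\phi$ via Lemma~\ref{lem-LF} is a welcome detail that the paper only gestures at with ``continuity argument.''
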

\begin{proof}
Assume the contrary, i.e. let for some function $h\in L_2([0,1])$ we have that  $\int_{u}^t h(s)\big(a(s)-K(t,s)\big)ds >0$ for any  $t\in \mathfrak{G}_{a}\cap (u,1]$.   The set $\mathfrak{G}_{a}\cap (u,1]$ is closed because $g_a(u)<f(a)$. Therefore $$\kappa:=\min_{t\in \mathfrak{G}_{a}\cap (u,1]}\int_{u}^t h(s)\big(a(s)-K(t,s)\big)ds >0.$$ Denote $$\mathfrak{B}_\eps = \{t\in (u,1]: \mathfrak{G}_{a}\cap (u,1]\cap (t-\eps,t+\eps)\neq \varnothing\}$$
the intersection of $\eps$-neighborhood of the set $\mathfrak{G}_{a}\cap (u,1]$ with interval $(u,1]$. Continuity argument implies that for some   $\eps>0$ it holds that
$$
\int_{u}^t h(s)\big(a(s)-K(t,s)\big)ds>\kappa/2
$$
for any $t\in \mathfrak{B}_\eps$.
Similarly to the proof of Lemma ~\ref{cikavetverdz}, denote   $b_\delta (t) = a(t) - \delta h(t)1_{(u,1]}(t)$ for any $\delta>0$.
Then we have that  $g_{b_\delta}(t) = g_a(t)$ for any $t\in[0,u]$, and
\begin{gather*}
g_{b_\delta}(t)^2 \le f(a)^2 - 2\delta \int_{u}^t h(s)\big(a(s)-K(t,s)\big) ds + \delta^2 \int_{u}^t h(s)^2 ds\le\\
\le f(a)^2 -\kappa \delta + \delta^2 \int_0^1 h(s)^2 ds
\end{gather*}
for any $t \in \mathfrak{B}_\eps$.
It follows from the continuity of  $g_a$ that $m=\max_{t\in [u,1]\setminus \mathfrak{B}_\eps} g_a(t)<f(a)$. Therefore
we have for  $t \in (u,1]\setminus \mathfrak{B}_\eps$ that
\begin{gather*}
g_{b_\delta}(t)^2 = g_a(t)^2 - 2\delta \left(a(s)-K(1,s)\right) + \delta^2 \int_{t_1}^t h(s)^2 ds\le\\
\le m^2 - 2\delta \int_{t_1}^t h(s)\big(a(s)-K(t,s)\big) ds + \delta^2 \int_{t_1}^t h(s)^2 ds\le m^2 + C\delta,
\end{gather*}
with the constant $C$ that does not depend on $t$ and $\delta$. It follows from the above bounds that for sufficiently small  $\delta>0$ and for any $t\in(u,1]$ we have the inequality
$g_{b_\delta}(t)< f(a)$.
It means that for sufficiently small $\delta>0$ we get the equality $f(b_\delta)= f(a)$, and moreover, $g_{b_\delta}(1)<f(a) = f(b_\delta)$, which contradicts  Lemma~\ref{lem_sup_point}.
\end{proof}

Lemma  \ref{propersep} supplies the form of minimizing function on the part of the interval  $[0,1]$. All equalities below are considered a.s.
\begin{lemma}\label{st-2.exxi}
Let $t_1 = \min\{t\in(0,1): g_a(t) = f(a)\}$. Then there exist  $t_2\in(t_1,1]\cap \mathfrak{G}_{a}$ and random variable  $\xi$ with the values in $[t_1,t_2]\cap \mathfrak{G}_{a}$ such that for
 $t\in [0,t_2)$  we have that  $P(\xi\ge t)>0$, and the equality
$$
a(t) = \M[K(\xi,t)|\xi\ge t]
$$ holds.
\end{lemma}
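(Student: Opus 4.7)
The plan is to translate Lemma~\ref{propersep} into an affirmative ``barycentre'' statement and then read off $\xi$ from the resulting probability measure on $\mathfrak{G}_a$. For each $t \in \mathfrak{G}_a$ I would introduce the vector
\[
v_t(s) = (a(s)-K(t,s))\,1_{[0,t]}(s) \in L_2([0,1]).
\]
Since $g_a(0)=0<f(a)$ (note $f(a)>0$: otherwise $a=K(t,\cdot)$ a.e.\ on $[0,t]$ for every $t$, which is impossible), Lemma~\ref{propersep} with $u=0$ rules out any $h\in L_2([0,1])$ satisfying $\langle h,v_t\rangle>0$ for every $t\in\mathfrak{G}_a\cap(0,1]=\mathfrak{G}_a$. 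By the Hilbert-space projection theorem (strict Hahn--Banach separation of a point from a closed convex set), this is exactly the statement
\[
0\in\overline{\mathrm{conv}}\{v_t:t\in\mathfrak{G}_a\}.
\]

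The second step is to upgrade this inclusion to an explicit probability measure on $\mathfrak{G}_a$. The set $\mathfrak{G}_a$ is closed in $[0,1]$ (continuity of $g_a$) and contained in $[t_1,1]$, hence compact; the map $t\mapsto v_t$ from $\mathfrak{G}_a$ to $L_2([0,1])$ is continuous, because $K(t,\cdot)$ converges in $L_2$ as $t$ varies (dominated convergence with majorant $K(1,\cdot)$, using monotonicity from Lemma~\ref{lem_K_prop}). Writing $0$ as a limit of convex combinations $\sum_i\lambda_i^{(n)}v_{t_i^{(n)}}$ with $t_i^{(n)}\in\mathfrak{G}_a$, I view the $\mu_n=\sum_i\lambda_i^{(n)}\delta_{t_i^{(n)}}$ as finitely supported probability measures and extract a weak limit $\mu$ by Prokhorov's theorem. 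Weak convergence together with the continuity of $t\mapsto\langle h,v_t\rangle$ on $\mathfrak{G}_a$ shows that $\int v_t\,d\mu(t)=0$ in $L_2([0,1])$.

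Let $\xi$ be a random variable with distribution $\mu$ and set $t_2=\max\supp(\mu)\in\mathfrak{G}_a$, so $\xi\in[t_1,t_2]\cap\mathfrak{G}_a$ almost surely. Unpacking the $L_2$ identity pointwise in $s$ gives, for a.e.\ $s\in[0,1]$,
\[
0=\int_{\mathfrak{G}_a}(a(s)-K(t,s))\,1_{t\ge s}\,d\mu(t)=a(s)\,P(\xi\ge s)-\M[K(\xi,s)\,1_{\xi\ge s}].
\]
For any $s<t_2$ the measure $\mu$ charges every neighbourhood of $t_2$, so $P(\xi\ge s)>0$, and the identity rearranges to $a(s)=\M[K(\xi,s)\mid\xi\ge s]$. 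To rule out $t_2=t_1$, suppose it held: then $\mu=\delta_{t_1}$ and $v_{t_1}=0$, which forces $a=K(t_1,\cdot)$ a.e.\ on $[0,t_1]$ and consequently $g_a(t_1)=0<f(a)$, contradicting $t_1\in\mathfrak{G}_a$. Hence $t_2>t_1$, completing the construction.

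The principal obstacle, I expect, is the first passage: turning the \emph{non-existence} statement of Lemma~\ref{propersep} into the \emph{existence} of a genuine representing probability measure on the infinite index set $\mathfrak{G}_a$. This demands combining Hahn--Banach separation in the Hilbert space $L_2([0,1])$ with a Prokhorov-type compactness argument on $\mathfrak{G}_a$ so that the barycentre of $\mu$ is literally $0$. Once $\mu$ is produced, the remaining work is routine: pointwise interpretation of the vector identity and the short contradiction argument that rules out the degenerate case $\mu=\delta_{t_1}$.
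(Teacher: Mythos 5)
Your proposal is correct and follows essentially the same route as the paper: both apply Lemma~\ref{propersep} with $u=0$ to the family $\{v_t\}=\{a-k_t\}$ indexed by $\mathfrak{G}_a$, conclude via convex separation that $a$ lies in the closed convex hull (equivalently that $0$ is a barycentre of the $v_t$), read off $\xi$ as a random variable with the resulting distribution on $\mathfrak{G}_a$, and exclude the degenerate case $\supp\mu=\{t_1\}$ by the same contradiction $g_a(t_1)=0$. The only difference is cosmetic but welcome: where the paper invokes the proper separation theorem and simply declares the closed convex hull to be the set of integrals $\int k_t\,F(dt)$, you use the projection/strict-separation theorem and justify the existence of the representing measure explicitly via compactness of $\mathfrak{G}_a$, continuity of $t\mapsto v_t$, and Prokhorov's theorem, which fills in the step the paper leaves implicit.
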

\begin{proof}
Consider the set of functions
$$
\mathcal K = \left\{ k_t(s) = K(t,s)1_{s\le t} + a(s)1_{s>t}, t\in \mathfrak{G}_{a} \right\}
$$
and let
$$
\mathcal C = \left\{ \int_0^1 k_t(s) F(dt),\, F\text{ is the distribution function on $\mathfrak{G}_{a}$} \right\}
$$
be the closure of the convex hull of $\mathcal K$. According to Lemma~\ref{propersep}, applied to  $u=0$, there does not exist $h\in L_2([0,1])$ such that
$(h,k)<(h,a)$ for any $k\in\mathcal K$. Moreover,  there is no $h\in L_2([0,1])$ such that $(h,k)<(h,a)$ for any  $k\in\mathcal C$, i.e. the element $a$ and the set  $\mathcal K$ can not be separated properly. Then, according to the proper separation theorem
(see e.g. \cite[Corollary 4.1.3]{convan}), $a\in \mathcal C$, so there exists such distribution  $F$ on
$\mathfrak{G}_{a}$ that
\begin{equation}\label{umozh-1}
a(s) = \int_0^1 k_t(s) G(dt) = \int_{[s,1]} k_t(s) F(dt) + \int_{[0,s)} a(s) F(dt).
\end{equation}
Hence
\begin{equation}\label{umozh}
a(s)F([s,1]) = \int_{[s,1]} k_t(s) F(dt).
\end{equation}
Note that the equality $\supp F=\{t_1\}$ is impossible because otherwise it follows from equation \eqref{umozh} that $a(s) = K(t_1,s)$ for $s\le t_1$, therefore $g_a(t_1)=0$ which contradicts the assumption $g_a(t) = f(a)$.

Using the latter statement and  $\eqref{umozh}$, we get the statement of the theorem with $t_2 = \max(\supp F)$ and random variable $\xi$ with the distribution $F$.
\end{proof}

Conditions on minimizing function from Lemma~\ref{st-2.exxi} are sufficient in the following sense.
\begin{lemma}\label{thm-cond_a_min}
Let $y\in\LT$. Define the kernel  $K_y(t,s)$ for $s,t \in[0,1]$ as
\[
K_y(t,s) = \begin{cases}
K(t,s) & \mbox{\rm for} \quad t\ge s, \\
y(s)   & \mbox{\rm for} \quad t<s .
\end{cases}
\]
Function $y$ is the minimizing function of the principal functional  $f $ if and only if there exists random variable
 $\xi$ taking values in  $[0,1]$ such that the following conditions hold:
\begin{gather}
\label{cond-a_min_a}
y(s) = \M K_y(\xi,s) \qquad \mbox{a.a. $s\in[0,1]$},\\
\label{cond-a_min_b}
g_y(\xi) = f(y) \qquad \mbox{a.s.}
\end{gather}
\end{lemma}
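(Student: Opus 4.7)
The plan is to prove the two implications separately, treating condition~\eqref{cond-a_min_a} as a kind of orthogonality relation that feeds into a clean variance-style identity.

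For necessity, I would apply Lemma~\ref{st-2.exxi} to the unique minimizer $a=y$: it produces a random variable $\xi$ supported in $[t_1,t_2]\cap\mathfrak{G}_y$ satisfying $y(t)=\M[K(\xi,t)\mid\xi\ge t]$ for $t\in[0,t_2)$. Condition~\eqref{cond-a_min_b} is then immediate since $\xi\in\mathfrak{G}_y$ a.s. To recover~\eqref{cond-a_min_a}, I rewrite the conditional expectation as $y(s)\,\pr(\xi\ge s)=\M[K(\xi,s)1_{\xi\ge s}]$, add $y(s)\,\pr(\xi<s)$ to both sides, and read the right-hand side as $\M K_y(\xi,s)$ using the definition of $K_y$. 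For $s>t_2$ the event $\{\xi<s\}$ has full probability, so $K_y(\xi,s)=y(s)$ a.s.\ and the identity is trivial; hence~\eqref{cond-a_min_a} holds for a.e.\ $s\in[0,1]$.

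For sufficiency, assume $\xi$ satisfies~\eqref{cond-a_min_a}--\eqref{cond-a_min_b}. The central observation is that
\[
\M\int_0^1 \bigl(K_y(\xi,s)-y(s)\bigr)^2\, ds = \M\, g_y(\xi)^2 = f(y)^2,
\]
because the integrand vanishes on $(\xi,1]$ while on $[0,\xi]$ it integrates to $g_y(\xi)^2$, which equals $f(y)^2$ a.s.\ by~\eqref{cond-a_min_b}. For any competitor $x\in\LT$ I would expand
\[
\bigl(K_y(\xi,s)-x(s)\bigr)^2 = \bigl(K_y(\xi,s)-y(s)\bigr)^2 + 2\bigl(K_y(\xi,s)-y(s)\bigr)\bigl(y(s)-x(s)\bigr) + \bigl(y(s)-x(s)\bigr)^2,
\]
apply $\M\int_0^1\cdot\,ds$, and annihilate the cross term via Fubini and~\eqref{cond-a_min_a}, obtaining $\M\int_0^1(K_y(\xi,s)-x(s))^2\, ds = f(y)^2 + \|y-x\|_{L_2([0,1])}^2$. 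Splitting the same integral at $s=\xi$ gives $\M g_x(\xi)^2 + \M\int_\xi^1 (y(s)-x(s))^2\, ds$, and rearranging yields $\M g_x(\xi)^2 = f(y)^2 + \M\int_0^\xi (y(s)-x(s))^2\, ds \ge f(y)^2$. Since $g_x(\xi)^2\le f(x)^2$ pointwise, taking expectation gives $f(x)\ge f(y)$, so $y$ minimizes $f$.

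The main obstacle, in my view, is discovering the identity rather than verifying it: the whole argument rests on noticing that the auxiliary kernel $K_y$ has been engineered precisely so that $K_y(\xi,\cdot)-y$ is ``orthogonal'' to every deterministic $L_2$ test function by~\eqref{cond-a_min_a}, while producing $g_y(\xi)^2$ on $[0,\xi]$ and vanishing on $[\xi,1]$. Once this is spotted, the computation is a routine expansion of a square and a splitting of the domain at $s=\xi$; the only technical point is a Fubini check, which follows from the finiteness of $f(y)$ and $\|y-x\|_{L_2([0,1])}$ together with Cauchy--Schwarz.
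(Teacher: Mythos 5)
Your proof is correct. The necessity half coincides with the paper's (both invoke Lemma~\ref{st-2.exxi} and rewrite the conditional expectation as $\M K_y(\xi,s)$), but your sufficiency argument takes a genuinely different route. The paper argues by contradiction: it perturbs $y$ toward a strictly better competitor, $b=y+\delta(a-y)$, uses convexity of $f^2$ to get $\max_t\|K_y(t,\cdot)-b\|^2\le f(y)^2-\delta(f(y)^2-f(a)^2)+\delta^2\|a-y\|^2<f(y)^2$ for small $\delta$, and contradicts the lower bound $\max_t\|K_y(t,\cdot)-b\|^2\ge\M\|K_y(\xi,\cdot)-\M K_y(\xi,\cdot)\|^2=f(y)^2$, which is the bias--variance inequality with the cross term merely discarded. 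You instead keep the full expansion: the exact identity $\M\int_0^1(K_y(\xi,s)-x(s))^2\,ds=f(y)^2+\|y-x\|^2$ (cross term killed by \eqref{cond-a_min_a} and Fubini), combined with the alternative splitting at $s=\xi$, gives directly
\[
f(x)^2\ \ge\ \M g_x(\xi)^2\ =\ f(y)^2+\M\int_0^\xi\bigl(y(s)-x(s)\bigr)^2\,ds\ \ge\ f(y)^2
\]
for every competitor $x$, with no perturbation, no convexity, and no contradiction. Both proofs rest on the same engineered fact that $y=\M K_y(\xi,\cdot)$ is the $L_2$-barycenter of the random kernel, but yours is shorter and yields a quantitative strengthening, $f(x)^2-f(y)^2\ge\M\int_0^\xi(y-x)^2\,ds$, which the paper's argument does not produce. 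The only points to make explicit in a final write-up are the Fubini justification (which you note follows from Cauchy--Schwarz) and the measurability of $g_x(\xi)$, which holds since $g_x$ is continuous.
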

\begin{proof}
The necessity was proved in Lemma~\ref{st-2.exxi}.
Indeed, take  $\xi$ that was obtained in the course of the proof of Lemma~\ref{st-2.exxi}.
Then condition  \eqref{cond-a_min_a} follows from the equality \eqref{umozh-1},
while condition \eqref{cond-a_min_b} follows from  the fact that $\xi \in \mathfrak{G}_{a}$.

The sufficiency is proved basically by reversing a proper separation argument from Lemma~\ref{st-2.exxi}: if a
function belongs to the convex set $\mathcal{C}$, then it cannot be properly separated from this set, which means that it is a minimizer.
To make this idea rigorous, assume the contrary: let a function $y$ satisfy \eqref{cond-a_min_a}
and \eqref{cond-a_min_b}, but $y\notin\mathfrak{M}_f$.
Then there exists function $a\in \LT$  such that  $f(y)>f(a)$
(for example, we can take   $a$ as the minimizing function).
Functional $f^2$ is convex, therefore
\[
f(y + \delta(a-y))^2 \le f(y)^2 + \delta\,(f(a)^2 - f(y)^2),
\qquad 0 \le \delta \le 1.
\]
It is easy to see that for any function  $b \in \LT $
\begin{multline*}
\max_{t\in [0,1]} \| K_y(t,\cdot) - b \|^2 =
\max_{t \in [0,1]} \biggl(
\int_0^t (K(t,s) - b(s))^2 ds +
\int_t^1 (y(s) - b(s))^2 ds \biggr) \le \\
\le \max_{t\in [0,1]}
\int_0^t (K(t,s) - b(s))^2 ds +
\int_0^1 (y(s) -b(s))^2 = f(b)^2 + \|y-b\|^2 .
\end{multline*}
Therefore for  $0 \le \delta \le 1$ we have that
\[
\max_{t\in [0,1]} \| K_y(t,\cdot) - y - \delta\,(a-y)\|^2 \le
f(y)^2 - \delta \, (f(y)^2-f(a)^2) + \delta^2 \|a-y\|^2.
\]
It means that for sufficiently small $\delta>0$
\begin{equation}
\label{eq-L813}
\max_{t\in [0,1]} \| K_y(t,\cdot) - y - \delta\,(a-y)\|^2 < f(y)^2 .
\end{equation}
On one hand, choose arbitrary $\delta$ for which the inequality \eqref{eq-L813} holds,
and set $b = y + \delta\,(a-y)$. Then
\begin{equation}
\label{eq-L819}
\max_{t\in [0,1]} \| K_y(t,\cdot) - b\|^2 < f(y)^2 .
\end{equation}
On the other hand,
\begin{align}
\label{eq-L824}
\nonumber
\max_{t\in[0,1]} \| K_y(t,\cdot)-b\|^2 &\ge
\M \|K_y(\xi,\cdot) - b\|^2 \ge
\M \|K_y(\xi,\cdot) - \M K_y(\xi,\cdot)\|^2 = \\ &=
\M \|K_y(\xi,\cdot) - y\|^2 = \M g_y(\xi)^2 = f(y)^2.
\end{align}
Inequalities  \eqref{eq-L819} and  \eqref{eq-L824} contradict each other.
So, assuming that function $y$ is not minimizing for principal functional $f$,
we get the contradiction. Therefore, ${f(y) = \min f}$.
\end{proof}

Now we  are in position to prove that
\[
\esssup \xi: = \min\{t : \pr(\xi\le t) = 1\} = \max(\supp \xi)=1,
\]
which will imply that $t_2=1$ in Lemma~\ref{st-2.exxi}.

\begin{lemma}\label{st-3.esssup}
Let $a$ be the minimizing function for principal functional $f$ and let
 $\xi$ be random variable satisfying conditions
\eqref{cond-a_min_a} and  \eqref{cond-a_min_b} with $x=a$.
Then $\esssup \xi = 1$.
\end{lemma}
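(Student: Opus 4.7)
The plan is to argue by contradiction using the self-similarity $K(ct,cs) = c^\alpha K(t,s)$ from Lemma~\ref{lem_K_prop}(4). I would suppose $T := \esssup \xi < 1$. Since $\xi \le T$ almost surely, condition~\eqref{cond-a_min_b} together with continuity of $g_a$ yields $\sup_{t \in [0,T]} g_a(t) = f(a)$; this is the only place where I use the structural information about $\xi$ beyond its essential supremum.

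Next, I would introduce the rescaled pair
\[
\tilde a(s) := T^{-\alpha} a(Ts), \qquad \tilde\xi := \xi/T,
\]
both living on $[0,1]$, and show that $(\tilde a,\tilde\xi)$ satisfies the sufficient conditions of Lemma~\ref{thm-cond_a_min}. The change of variable $u = Ts$ combined with $K(T\tilde t, Ts) = T^\alpha K(\tilde t, s)$ yields the key identity
$g_{\tilde a}(\tilde t) = T^{-\alpha-1/2} g_a(T\tilde t)$ for $\tilde t\in[0,1]$; taking the supremum and using the previous paragraph gives $f(\tilde a) = T^{-\alpha-1/2} f(a)$, while evaluating at $\tilde\xi$ gives $g_{\tilde a}(\tilde\xi) = T^{-\alpha-1/2} g_a(\xi) = f(\tilde a)$ almost surely, establishing \eqref{cond-a_min_b}. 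For \eqref{cond-a_min_a}, I would decompose $\M K_{\tilde a}(\tilde\xi, s) = \M[K(\tilde\xi,s)\,1_{\tilde\xi\ge s}] + \tilde a(s)\,\pr(\tilde\xi<s)$, apply $K(\xi/T,s) = T^{-\alpha} K(\xi,Ts)$ on the event $\{\tilde\xi\ge s\}=\{\xi\ge Ts\}$ to obtain $T^{-\alpha}\M[K(\xi,Ts)\,1_{\xi\ge Ts}]$ for the first term, and invoke \eqref{cond-a_min_a} for $(a,\xi)$ at the argument $Ts$ (rearranged as $\M[K(\xi,Ts)\,1_{\xi\ge Ts}] = a(Ts)\,\pr(\xi\ge Ts)$) to conclude that $\M K_{\tilde a}(\tilde\xi,s) = \tilde a(s)$.

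By the sufficiency part of Lemma~\ref{thm-cond_a_min}, $\tilde a$ is a minimizer of $f$, and by the uniqueness theorem of Section~4 one has $\tilde a = a$. Combined with $f(\tilde a) = T^{-\alpha-1/2} f(a)$ this gives $f(a) = T^{-\alpha-1/2} f(a)$, which, since $\alpha + 1/2 > 0$ and $f(a) > 0$ (no single element of $L_2([0,1])$ can coincide with $K(t,\cdot)$ on $[0,t]$ for all $t$, so the infimum is strictly positive when $H\in(1/2,1)$), forces $T = 1$, contradicting $T < 1$. I expect the principal obstacle to be the careful bookkeeping of the self-similarity transformation in the verification of \eqref{cond-a_min_a}; once the scaling identities are set up correctly, reduction to the uniqueness theorem is immediate.
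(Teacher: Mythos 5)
Your proof is correct and follows essentially the same route as the paper: rescale $a$ and $\xi$ by $T=\esssup\xi$ using the self-similarity $K(ct,cs)=c^\alpha K(t,s)$, verify that the rescaled pair satisfies the sufficient conditions of Lemma~\ref{thm-cond_a_min}, and conclude from $f(\tilde a)=T^{-\alpha-1/2}f(a)$ together with minimality (the paper uses $f(\tilde a)=\min f=f(a)$ directly rather than invoking uniqueness, a cosmetic difference) and $f(a)>0$ that $T=1$. Your explicit verification of \eqref{cond-a_min_a} fills in a computation the paper only asserts.
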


\begin{proof}
Denote $t_2 = \esssup \xi$. Evidently,  $\xi$ takes values from $[0, t_2]$.

Consider a function
\[
b(s) = t_2^{-\alpha} a(t_2 s), \qquad s\in[0,1].
\]
Then, in view of the self-similarity property
(item 4 in Lemma~\ref{lem_K_prop}),
\[
b(s) = \M K_b(\xi/t_2,\: s),
\]
where $K_b(t, s)$ is defined in the formulation of Lemma~\ref{thm-cond_a_min}.
Using~\eqref{selfsim}, we get
\[
g_b(t) = t_2^{-H} g_a(t_2 t), \qquad t\in[0,1].
\]
On one hand, since $a(s)$ satisfies
\eqref{cond-a_min_b}, we have
\[
f(b) = \max_{[0,1]} g_b \ge g_b\bigr({\textstyle\frac{\xi}{t_2}}\bigr) =
t_2^{-H} g_a(\xi) = t_2^{-H} f(a)
\]
a.s.; on the other hand
\[
f(b) = \max_{[0,1]} g_b = t_2^{-H} \max_{[0, t_2]} g_a \le
t_2^{-H} \max_{[0,1]} g_a = t_2^{-H} f(a) .
\]
This implies
\[
f(b) = g_b\bigr({\textstyle\frac{\xi}{t_2}}\bigr) = t_2^{-H} f(a)
\qquad \text{a.e.}
\]

Therefore, the function $b$ satisfies~\eqref{cond-a_min_a} and \eqref{cond-a_min_b}
and is therefore a minimizer of $f$. Hence
\[
t_2^{-H} f(a) = f(b) = \min_{\LT} f = f(a),
\]
so $t_2 = 1$, as required.
\end{proof}

\subsection{Main properties of the minimizing function}

We can refine Lemma~\ref{st-2.exxi} in view of Lemma~\ref{st-3.esssup}.
We remind that $a$ is the minimizing function for the principal
functional  $f$ and  $\mathfrak{G}_a = \{t\in[0,1] : g_a(t) = f(a)\}$.
\begin{theorem}\label{cor-vaE}
There exists a random variable $\xi_a$ assuming values in $\mathfrak{G}_a$ such that
\begin{gather}
\pr(\xi_a \ge s) > 0 \qquad \mbox{for all $s\in[0,1)$}, \nonumber \\
a(s) = \M[K(\xi_a,s) \mid \xi\ge s] \qquad \mbox{a.e. in $[0,1]$}.\label{minimizator-form}
\end{gather}
\end{theorem}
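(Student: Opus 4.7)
The strategy is essentially bookkeeping: I would combine Lemma~\ref{st-2.exxi} with Lemma~\ref{st-3.esssup} and observe that the right endpoint $t_2$ produced by the former must equal $1$.

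Concretely, I would first invoke Lemma~\ref{st-2.exxi} for the unique minimizer $a$. It delivers a cutoff $t_2 \in (t_1, 1] \cap \mathfrak{G}_a$ together with a random variable $\xi$ taking values in $[t_1, t_2] \cap \mathfrak{G}_a$, such that $\pr(\xi \ge s) > 0$ for every $s \in [0, t_2)$ and
$$
a(s) = \M[K(\xi, s) \mid \xi \ge s] \quad \text{a.e. on } [0, t_2).
$$

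Next I would verify that this $\xi$ satisfies the hypotheses of Lemma~\ref{st-3.esssup}, i.e.\ conditions \eqref{cond-a_min_a} and \eqref{cond-a_min_b}. Multiplying the displayed identity by $\pr(\xi \ge s)$ and recalling that $K_a(\xi, s) = K(\xi, s)\,1_{\xi \ge s} + a(s)\,1_{\xi < s}$, one gets $a(s) = \M K_a(\xi, s)$ a.e.\ on $[0, t_2)$; for $s > t_2$ the identity is automatic because $\xi < s$ a.s., so \eqref{cond-a_min_a} holds a.e.\ on $[0, 1]$. Condition \eqref{cond-a_min_b} holds because $\xi \in \mathfrak{G}_a$ by construction. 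Lemma~\ref{st-3.esssup} then yields $\esssup \xi = 1$, and since $\xi \le t_2$ a.s., this forces $t_2 = 1$.

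Setting $\xi_a := \xi$, all three conclusions of the theorem follow at once: $\xi_a$ takes values in $\mathfrak{G}_a$; $\pr(\xi_a \ge s) > 0$ for all $s \in [0, 1)$; and \eqref{minimizator-form} holds a.e.\ on $[0, 1)$, which is the same as a.e.\ on $[0, 1]$. I do not anticipate a genuine obstacle here, as all the real work is encapsulated in the auxiliary lemmas; the one point that deserves careful attention is the compatibility check above, namely that the $\xi$ obtained from the proper-separation argument in Lemma~\ref{st-2.exxi} does fit the template required by Lemma~\ref{st-3.esssup}.
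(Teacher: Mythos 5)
Your proposal is correct and follows essentially the same route as the paper, which simply declares the theorem a straightforward consequence of Lemma~\ref{st-3.esssup} applied to the $\xi$ produced by Lemma~\ref{st-2.exxi}; your explicit verification that this $\xi$ satisfies \eqref{cond-a_min_a} and \eqref{cond-a_min_b} (via $\M K_a(\xi,s)=a(s)\pr(\xi\ge s)+a(s)\pr(\xi<s)=a(s)$) is exactly the compatibility check the paper leaves implicit, having already recorded it in the necessity part of Lemma~\ref{thm-cond_a_min}.
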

\begin{proof}
This statement is a straightforward consequence of Lemma~\ref{st-3.esssup}.
\end{proof}
We will assume further (clearly, without loss of generality) that \eqref{minimizator-form} holds for \emph{every} $s\in[0,1]$:
\begin{equation}
a(s) = \M[K(\xi_a,s) \mid \xi\ge s] \qquad \mbox{for any $s\in[0,1]$}.\label{minimizator-form1}
\end{equation}

\begin{corollary}\label{thm-wbmf}
1. The minimizing function $a$ is left-continuous and has right limits.

2. For any $s\in[0,1)$
\begin{equation}
0 < a(s) \le K(1,s) ,
\label{neq1-thm-wbmf}
\end{equation}
moreover,
\[
a(s)< K(1,s)
\]
on a set of positive Lebesgue measure.%
%
\end{corollary}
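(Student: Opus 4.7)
The plan is to work with the explicit representation of $a$ in terms of the law of $\xi_a$. Writing $F$ for the distribution of $\xi_a$, relation~\eqref{minimizator-form1} becomes
\[
a(s) = \frac{N(s)}{D(s)}, \qquad N(s) := \int_{[s,1]} K(t,s)\,F(dt), \qquad D(s) := F([s,1]),
\]
and the hypothesis $\pr(\xi_a\ge s)>0$ for $s\in[0,1)$ ensures $D(s)>0$ on this interval, so the quotient is well defined.

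For Part~1 I will show that $N$ and $D$ are each left-continuous with right limits on $[0,1)$; this passes to the quotient. The decreasing function $D$ satisfies $D(s-)=\pr(\xi_a\ge s)=D(s)$ and $D(s+)=D(s)-\pr(\xi_a=s)$ by standard monotone limits for nested events. For $N$, using that $K(t,s')=0$ whenever $t<s'$, one can write
\[
N(s') = \int_{[s,1]} K(t,s')\,F(dt) + \int_{[s'\wedge s,\,s)} K(t,s')\,F(dt),
\]
where the second integral is present only when $s'<s$. As $s'\to s$ the first integral tends to $N(s)$ by dominated convergence, using continuity of $K$ on $[0,1]\times(0,1]$ and a dominant $K(t,s_0)$ with $s_0<s$ fixed; the second integral vanishes because $F([s',s))\to 0$ while $K(\cdot,s')$ remains bounded on the integration region.

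The upper bound in Part~2 is immediate from monotonicity of $K$ in its first argument: $K(\xi_a,s)\le K(1,s)$, hence $a(s)=\M[K(\xi_a,s)\mid\xi_a\ge s]\le K(1,s)$. For strict positivity, fix $s\in(0,1)$ and choose any $s'\in(s,1)$; then $\pr(\xi_a>s)\ge\pr(\xi_a\ge s')>0$, so the event $\{\xi_a>s\}$ has positive probability inside $\{\xi_a\ge s\}$, on which $K(\xi_a,s)>0$ by inspection of \eqref{oj4}, giving $a(s)>0$.

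For the last claim, assume for contradiction that $a(s)=K(1,s)$ a.e.; then $g_a(1)=\|K(1,\cdot)-a\|_{L_2([0,1])}=0$, so Lemma~\ref{lem_sup_point} forces $f(a)=g_a(1)=0$. But $f(a)=0$ would require $a(s)=K(t,s)$ for a.e.\ $s\in[0,t]$ and every $t\in[0,1]$, which is impossible since $K$ is strictly increasing in its first argument. The main technical obstacle is the careful handling of atoms of $F$ and of the endpoint $s=0$ (where $K(1,s)$ is unbounded) in the continuity and positivity arguments; both are managed by the dominated-convergence and conditional-probability framework above.
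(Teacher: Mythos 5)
Your proof is correct and takes essentially the same route as the paper's: part 1 is the paper's one-line appeal to \eqref{minimizator-form1}, continuity of $K$ and dominated convergence, which you simply carry out in detail via the quotient $N(s)/D(s)$, and part 2 uses exactly the paper's ingredients (monotonicity and positivity of $K$ from Lemma~\ref{lem_K_prop}, the fact that $\pr(\xi_a>s)>0$ for $s<1$, and the reduction of the final claim to $g_a(1)=0$ contradicting Lemma~\ref{lem_sup_point}). No gaps; your version is just more explicit about the atom/endpoint issues than the original.
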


\begin{proof}1. Follows from \eqref{minimizator-form1}, continuity of $K$ and the dominated convergence.

2. Taking into account statement 2 of Lemma~\ref{lem_K_prop}, for $0 < s < t \le 1$
\begin{equation*}
0 < K(t,s) \le K(1,s).
\end{equation*}
Now \eqref{neq1-thm-wbmf} follows from \eqref{minimizator-form1} and the fact that $P(\xi_a>s)>0$ for $s<1$.
Further, if $a(s)=K(1,s)$ a.e., then $g_a(1)=0$,
which contradicts Lemma~\ref{lem_sup_point}.
%
\end{proof}

Further we investigate the distribution of $\xi$.
\begin{lemma}\label{prop_1sS}
There exists $t^* \in (0,1)$ such that
\[
\forall t \in (t^*, 1)\: : \: g_a(t) < f(a)
\]
\end{lemma}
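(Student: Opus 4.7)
The plan is to argue by contradiction, exploiting the fact that the gap $g_a(1)^2-g_a(1-h)^2$ cannot decay faster than linearly in $h$. Suppose the statement fails; then $\mathfrak{G}_a$ contains a sequence $t_n\nearrow 1$ with $t_n<1$, so $g_a(t_n)=f(a)=g_a(1)$ for $h_n:=1-t_n\to 0^+$. Writing $K(1-h,s)=K(1,s)-D(h,s)$ with $D(h,s):=K(1,s)-K(1-h,s)\ge 0$ and expanding the square in $g_a(1-h)^2$, I would obtain the identity
\begin{equation*}
g_a(1-h)^2-g_a(1)^2 \;=\; r(h)-2q(h)-p(h),
\end{equation*}
where $p(h)=\int_{1-h}^{1}(K(1,s)-a(s))^2 \, ds$, $q(h)=\int_0^{1-h}D(h,s)(K(1,s)-a(s))\,ds$, and $r(h)=\int_0^{1-h}D(h,s)^2 \, ds$; membership $t_n\in\mathfrak{G}_a$ then forces $r(h_n)=2q(h_n)+p(h_n)$.

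The heart of the argument is to derive two incompatible asymptotic estimates for $q(h_n)/h_n$. For the upper bound, Lemma~\ref{lem-LF} gives $r(h)\le h^{2H}$; combined with $p(h_n)\ge 0$ and $q\ge 0$, the equation forces $2q(h_n)\le r(h_n)\le h_n^{2H}$, whence $q(h_n)/h_n\le h_n^{2H-1}/2\to 0$ because $2H>1$. For the lower bound, the identity $D(h,s)=\int_{1-h}^{1}\partial_t K(t,s)\,dt$ and continuity of $\partial_t K$ at $t=1$ imply $D(h,s)/h\to \partial_t K(1,s)=C_\alpha s^{-\alpha}(1-s)^{\alpha-1}$ for every $s\in(0,1)$ as $h\to 0$; since the integrand of $q(h)/h$ is nonnegative, Fatou's lemma yields
\begin{equation*}
\liminf_{h\to 0}\frac{q(h)}{h} \;\ge\; \int_0^1 \partial_t K(1,s)\bigl(K(1,s)-a(s)\bigr)\,ds \;=:\; I.
\end{equation*}

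The two estimates together force $I\le 0$, contradicting $I>0$, which completes the argument. The main obstacle is to establish $0<I<\infty$: finiteness follows from $(K(1,\cdot)-a)\le K(1,\cdot)$ (Corollary~\ref{thm-wbmf}) together with the endpoint bounds $\partial_t K(1,s)K(1,s)=O(s^{-2\alpha})$ near $0$ and $O((1-s)^{2\alpha-1})$ near $1$, both integrable for $H\in(1/2,1)$; strict positivity comes from $\partial_t K(1,\cdot)>0$ on $(0,1)$ and the fact, established in Corollary~\ref{thm-wbmf}, that $a<K(1,\cdot)$ on a set of positive Lebesgue measure.
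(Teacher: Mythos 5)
Your proof is correct, and it reaches the same conclusion by a noticeably different route. The paper works directly with one-sided derivatives of $h=g_a^2$: it writes down $h'_{-}(t)=a(t)^2+2\int_0^t(K(t,s)-a(s))K'_t(t,s)\,ds$, observes via Corollary~\ref{thm-wbmf} that $h'_{-}(1)>0$, and concludes that $g_a<g_a(1)=f(a)$ on a left neighbourhood of $1$. The driving quantity is the same in both arguments --- your $I=\int_0^1 K'_t(1,s)(K(1,s)-a(s))\,ds$ is exactly half of the paper's $h'_{-}(1)$ (the boundary term vanishes since $K(1,1)=0$) --- but your execution replaces the existence of a genuine left derivative by two one-sided asymptotic bounds on the difference quotient along a hypothetical sequence $t_n\nearrow 1$ in $\mathfrak{G}_a$. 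This buys you something real: the paper's derivative formulas require differentiating under the integral sign with the singular kernel $K'_t(t,s)=C_\alpha s^{-\alpha}t^\alpha(t-s)^{\alpha-1}$, a step the paper asserts without justification, whereas you only need Fatou's lemma for a $\liminf$ lower bound on $q(h)/h$ and the exact covariance identity of Lemma~\ref{lem-LF} to kill the quadratic term $r(h)\le h^{2H}=o(h)$. The decomposition $g_a(1-h)^2-g_a(1)^2=r(h)-2q(h)-p(h)$ is correct, the signs $p,q,r\ge 0$ follow from $0<a\le K(1,\cdot)$ and the monotonicity of $K$ in its first argument, and the positivity and finiteness of $I$ are handled properly (finiteness is not even needed for the contradiction, since $I=+\infty$ would contradict $I\le 0$ just as well). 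The only cost of your approach is that it is slightly longer and proves marginally less (it does not produce the derivative $h'_{-}(1)$ itself, which the paper reuses in spirit in Theorem~\ref{thm-noatoms}), but as a proof of the stated lemma it is complete and arguably more rigorous than the original.
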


\begin{proof}
Denote
\[
h(t) = g_a(t)^2 = \int_{0}^t (K(t,s) - a(s))^2 ds .
\]
The function $h$ is continuous on $[0, 1]$ and has
left and right derivatives (except of $h'_+(0) = +\infty$):
\begin{gather*}
h'_{-}(t) = a(t)^2 + 2\int_0^t \big(K(t,s)-a(s)\big)\,K'_t(t,s)\, ds,\\
h'_{+}(t) = a(t+)^2 + 2\int_0^t \big(K(t,s)-a(s)\big)\,K'_t(t,s)\, ds,
\end{gather*}
where $K'_t(t,s) = \frac{\partial}{\partial t} K(t,s)
= C_\alpha s^{-\alpha} t^\alpha (t-s)^{\alpha-1}$.
Hence, by Corollary \ref{thm-wbmf}
\[
h'_{-}(1) = 2\int_0^1 \big(K(1,s)-a(s)\big)\,K'_t(1,s)\, ds > 0,
\]
and the statement easily follows.
\end{proof}

The lemma just proved means that $1$ is an isolated point of  $\mathfrak{G}_a$.

As an immediate corollary, we have the following theorem.
\begin{theorem}
There exists $t^*_a<1$ such that $\pr(\xi_a\in(t^*_a,1))=0$, and the distribution of $\xi_a$ has an atom at $1$, i.e. $\pr(\xi=1)>0$. Consequently, $a(s) = K(1,s)$ for all $s\in [t^*_a,1]$.
\end{theorem}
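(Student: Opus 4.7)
The plan is to extract $t^*_a$ directly from the isolation result in Lemma \ref{prop_1sS}, and then to use the representation of $a$ from Theorem \ref{cor-vaE} to pin down the behavior on $[t^*_a, 1]$. The whole argument is a bookkeeping exercise combining two facts already established: (i) $\xi_a$ takes values in $\mathfrak{G}_a$ almost surely with $\pr(\xi_a\ge s)>0$ for every $s<1$; and (ii) $1$ is an isolated point of $\mathfrak{G}_a$.

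First, I would apply Lemma \ref{prop_1sS} to obtain a point $t^* \in (0,1)$ with $g_a(t) < f(a)$ for every $t \in (t^*,1)$. Choose any $t^*_a \in (t^*,1)$; then $t^*_a<1$ and, crucially, $t^*_a\notin\mathfrak{G}_a$. Since $\xi_a\in\mathfrak{G}_a$ a.s.\ by Theorem \ref{cor-vaE}, and $\mathfrak{G}_a\cap(t^*_a,1)\subseteq\mathfrak{G}_a\cap(t^*,1)=\varnothing$, it follows at once that $\pr(\xi_a\in(t^*_a,1))=0$, and also $\pr(\xi_a=t^*_a)=0$.

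Next, to show $\pr(\xi_a=1)>0$, I would invoke the positivity assertion of Theorem \ref{cor-vaE} at $s=t^*_a<1$: $\pr(\xi_a\ge t^*_a)>0$. Decomposing
\[
\pr(\xi_a\ge t^*_a) = \pr(\xi_a=t^*_a) + \pr(\xi_a\in(t^*_a,1)) + \pr(\xi_a=1),
\]
the first two terms vanish by the previous step, so $\pr(\xi_a=1)>0$.

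Finally, for any $s\in[t^*_a,1]$ the same decomposition yields $\pr(\xi_a\ge s)=\pr(\xi_a=1)>0$, and conditional on $\{\xi_a\ge s\}$ the random variable $\xi_a$ equals $1$ almost surely. Applying \eqref{minimizator-form1} gives
\[
a(s) = \M[K(\xi_a,s)\mid \xi_a\ge s] = K(1,s),
\]
which is the conclusion on $[t^*_a,1]$. The only mildly delicate point — the main obstacle, if any — is to choose $t^*_a$ strictly greater than the $t^*$ supplied by Lemma \ref{prop_1sS}, rather than equal to it, so that $t^*_a$ falls outside the closed set $\mathfrak{G}_a$ and therefore carries no atom of $\xi_a$; this is what makes the atom at $1$ positive and the identity $a(s)=K(1,s)$ hold at the left endpoint $s=t^*_a$ as well.
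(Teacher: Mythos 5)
Your argument is correct and is precisely the one the paper has in mind: the paper states this theorem as an immediate corollary of Lemma~\ref{prop_1sS} (that $1$ is an isolated point of $\mathfrak{G}_a$) combined with Theorem~\ref{cor-vaE}, and you have simply written out the details. Your care in picking $t^*_a$ strictly inside $(t^*,1)$, so that $\pr(\xi_a=t^*_a)=0$ and the identity $a(s)=K(1,s)$ holds at the left endpoint, is exactly the right bookkeeping.
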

Further we prove that the distribution of $\xi_a$ has no other atoms.
\begin{theorem}\label{thm-noatoms}
For any $t\in(0,1)$ \ $\pr(\xi_a=t)=0$. Consequently, $a\in C[0,1]$.
\end{theorem}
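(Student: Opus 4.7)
The plan is to argue by contradiction. Suppose $p_0 := \pr(\xi_a = t_0) > 0$ for some $t_0 \in (0,1)$. Since $\xi_a$ takes values in $\mathfrak{G}_a$, the point $t_0$ is then an interior maximizer of $g_a$, and hence of $h := g_a^2$. The strategy is to show that the atom at $t_0$ forces the minimizer $a$ to have a strict upward jump there, and that such a jump is incompatible with the one-sided derivative conditions required for $h$ to attain its global maximum at $t_0$.

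Writing $\mu$ for the distribution of $\xi_a$, I rewrite \eqref{minimizator-form1} as $a(s)\,\mu([s,1]) = \int_{[s,1]} K(u,s)\,\mu(du)$ and pass to the one-sided limits $s \uparrow t_0$ and $s \downarrow t_0$. Dominated convergence and continuity of $K(u,\cdot)$ give $a(t_0)\,\mu([t_0,1]) = \int_{[t_0,1]} K(u,t_0)\,\mu(du)$ and $a(t_0+)\,\mu((t_0,1]) = \int_{(t_0,1]} K(u,t_0)\,\mu(du)$. The crucial identity $K(t_0,t_0) = 0$, readable from \eqref{oj4}, means the atom at $t_0$ contributes nothing to the first integral, so the two integrals are equal. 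Hence $a(t_0+) = a(t_0)\,\mu([t_0,1])/\mu((t_0,1])$. By the preceding theorem, $\mu((t_0,1]) \ge \pr(\xi_a = 1) > 0$, so the ratio is well defined; moreover $a(t_0) > 0$ since $K(u,t_0) > 0$ for all $u > t_0$ and $\mu((t_0,1]) > 0$. Therefore the ratio strictly exceeds $1$, and combined with left-continuity of $a$ (Corollary~\ref{thm-wbmf}) this yields $a(t_0-) = a(t_0) < a(t_0+)$.

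With the strict jump in hand, I invoke the one-sided derivative formulas from the proof of Lemma~\ref{prop_1sS}: $h'_-(t_0) = a(t_0)^2 + I$ and $h'_+(t_0) = a(t_0+)^2 + I$, where $I := 2\int_0^{t_0}(K(t_0,s)-a(s))\,K'_t(t_0,s)\,ds$ is common to both. Subtracting, $h'_+(t_0) - h'_-(t_0) = a(t_0+)^2 - a(t_0)^2 > 0$. On the other hand, $h$ attains its global maximum at the interior point $t_0$, so the standard one-sided extremum conditions force $h'_-(t_0) \ge 0 \ge h'_+(t_0)$, i.e.\ $h'_+(t_0) - h'_-(t_0) \le 0$. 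This contradiction proves $\pr(\xi_a = t) = 0$ for every $t \in (0,1)$.

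Continuity of $a$ follows quickly: the jump identity $a(t+) - a(t) = a(t)\,\mu(\{t\})/\mu((t,1])$ holds at every $t \in [0,1)$ by the same argument, and the right-hand side vanishes because $\mu(\{t\}) = 0$ for $t \in (0,1)$ by what has just been proved, while $\mu(\{0\}) = 0$ since $g_a(0) = 0 < f(a)$ places $0$ outside $\mathfrak{G}_a$; together with left-continuity this gives $a \in C[0,1]$. The main obstacle is the sharp jump calculation in the second paragraph: it crucially exploits the boundary identity $K(t_0,t_0) = 0$ specific to the fractional Brownian kernel, which causes the atom at $t_0$ to cancel from the numerator of \eqref{minimizator-form1} but not from its denominator, producing a strict rather than non-strict jump. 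Once this is in place, the one-sided derivative obstruction at a maximum closes the argument with no further subtleties.
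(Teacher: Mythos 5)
Your proof is correct and takes essentially the same route as the paper's: both derive the jump identity for $a(t+)-a(t)$ from \eqref{minimizator-form1} using $K(t,t)=0$ (so an atom of $\xi_a$ at $t$ forces a strictly positive jump of $a$), and both rule this out via the one-sided derivatives $h'_{\mp}(t)=a(t\mp)^2+I$ of $h=g_a^2$ at an interior maximum. The only differences are cosmetic — you argue by contradiction where the paper computes directly, and you normalize the jump formula by $\mu((t,1])$ rather than $\pr(\xi_a\ge t)$.
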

\begin{proof}
We start by computing for $t\in(0,1)$
\begin{equation}
\label{minimizerincrement}
\begin{gathered}
a(t+) - a(t) = \M[K(\xi_a,t)|\xi_a>t]-\M[K(\xi_a,t)|\xi_a\ge t]\\ =\frac{\M[K(\xi_a,t)1_{\xi_a>t}]\pr(\xi_a\ge t)-\M[K(\xi_a,t)1_{\xi_a\ge t}]\pr(\xi_a> t)}{\pr(\xi_a> t)\pr(\xi_a\ge t)}\\
= \frac{\M[K(\xi_a,t)1_{\xi_a>t}]\pr(\xi_a = t)-\M[K(\xi_a,t)1_{\xi_a=t}]\pr(\xi_a> t)}{\pr(\xi_a> t)\pr(\xi_a\ge t)}
\\= \frac{\M[K(\xi_a,t)1_{\xi_a>t}]\pr(\xi_a = t)}{\pr(\xi_a> t)\pr(\xi_a\ge t)}-\frac{\M[K(t,t)1_{\xi_a=t}]}{\pr(\xi_a\ge t)}
= \frac{a(t+)\pr(\xi_a=t)}{\pr(\xi_a\ge t)}.
\end{gathered}
\end{equation}
Further, as in the proof of Lemma~\ref{prop_1sS}, denote $h=g_a^2$ and observe that it has left and right derivatives at $t$ equal to
\begin{gather*}
h'_{-}(t) = a(t)^2 + 2\int_0^t \big(K(t,s)-a(s)\big)K'_t(t,s) ds,\\
h'_{+}(t+) = a(t+)^2 + 2\int_0^t \big(K(t,s)-a(s)\big)K'_t(t,s) ds.
\end{gather*}
But for any $t\in \mathfrak{G}_a$ \ $h'_{-}(t)\ge 0$, $h'_{+}(t+)\le 0$, so $a(t)\ge a(t+)$, whence from \eqref{minimizerincrement} we have that $a(t+) = a(t)$ and also  $\pr(\xi_a=t)=0$, as $a(t+)>0$.  For $t\notin \mathfrak{G}_a$ \ $\pr(\xi_a=t)=0$
(recall that $\xi_a$ takes values in $\mathfrak{G}_a$) and $a(t+)=a(t)$.
\end{proof}

\begin{remark}
Due to monotonicity of $K$ in the first variable, the right-hand of inequality \eqref{eq-sj1}
is maximal for $t_2=1$, so we have
that
\begin{equation}\label{lowerestimate}
f(a) \ge \frac 14 \max_{t\in[0,1]} \int_0^t \big(K(1,s)-K(t,s)\big)^2 ds.
\end{equation}

Theorem~\ref{thm-noatoms} implies in particular that the inequality is strict, i.e.\ this lower bound is not attained. Indeed,
if there were equality in \eqref{lowerestimate}, Lemma~\ref{lem_K_ineq} would imply that the distribution of $\xi_a$ is $\frac12(\delta_{t_0}+\delta_1)$,
where $t_0$ is the point where the minimum of the right-hand side of \eqref{lowerestimate} is attained, which would contradict Theorem~\ref{thm-noatoms}.
\end{remark}
\begin{remark}
From \eqref{minimizator-form1} it is easy to see that $a$ decreases on the complement of $\mathfrak{G}_a$. The numerical experiments in the following section suggest that $a$ is decreasing on $[0,1]$ (the positive jumps in the graphs are due to atoms, which are, clearly, unavoidable in the discrete case, but there are no atoms in the continuous) case. It seems even that $a$ is constant on $\mathfrak{G}_a\setminus\{1\}$, which would be a striking property to have. However, we did not manage to prove either of these facts.
\end{remark}

\section{Approximation of a discrete fBm by martingales}
In this section we consider a problem of minimization of the principal functional, but in discrete time. This is an approximation to the original problem, so its solution can be considered as an approximate solution to the original problem.

Let $N$ be a natural number, and define $b_k = B^H_{k/N}, k = 0,\ldots, N$. 
The vector  $b=(b_0,b_1,\ldots,b_N)$ will be called a discrete fBm. It generates a discrete filtration
$\F_k=\sigma(b_0,\ldots,b_k)$, $k=0,\ldots, N$. For arbitrary  random vector  $\xi = (\xi_0, \xi_1,\ldots,\xi_N)$ with square
integrable components denote
$$G(\xi) = \max_{k=0,\ldots,N}\M(b_k-\xi_k)^2.$$

Consider the problem of minimization of the functional  $G(\xi)$, where
$\xi$ is an $\F_k$-martingale.

Denote by $d_i = b_i-b_{i-1}, i=1,\ldots,N$ the increments of the discrete
fBm. Let $C$ be the covariance matrix of the vector  $(d_i|i=1,\ldots,N)$.
Using the Cholesky decomposition, one can find a lower triangular real matrix $L=(l_{ij}|i,j=1,\ldots,N)$
such that
$C=LL^T$. Then there exists a sequence $(\zeta_1,\ldots,\zeta_N)$ of independent standard
Gaussian random variables such that  $\zeta_k$ is $\F_k$-measurable for
$k=1,\ldots,N$ and
\begin{equation*}
\begin{pmatrix}
d_1\\
\vdots\\
d_N
\end{pmatrix} =
L \begin{pmatrix}
\zeta_1\\
\vdots\\
\zeta_N
\end{pmatrix}.
\end{equation*}

Define a matrix $K=(k_{ij}|i,j=1,\ldots,N)$ as follows:
$$
k_{ij} = \begin{cases}
            0,& i < j \\
            \sum_{s=1}^i l_{sj} & i \ge j.
         \end{cases}
$$
It is clear that
$$
\begin{pmatrix}
b_1\\
\vdots\\
b_N
\end{pmatrix} =
K \begin{pmatrix}
\zeta_1\\
\vdots\\
\zeta_N
\end{pmatrix}.
$$
The matrix $K$ is therefore can be regarded as a discrete counterpart of a
fractional Brownian kernel.

Further, we will show, as in the continuous case, that  minimization of $G$ over martingales is equivalent to minimization
over Gaussian martingales. Indeed, let $\xi = \xi=(\xi_0 = 0,\xi_1,\ldots,\xi_N)$ be arbitrary square integrable $\F_k$-martingale. Owing to the fact that $\F_k=\sigma\{\zeta_1,\dots,\zeta_k\}$, $k=1,\dots,N$, we have the following martingale representation:
$$
\xi_n = \sum_{k=1}^n\alpha_k \zeta_k,\quad n=1,\dots,N,
$$
where $\alpha_k$ is a square integrable $\F_k$-measurable random variable, $k=1,\dots,N$. Thus,
\begin{gather*}
G(\xi) = \max_{j=0,\ldots,N}\M(b_j-\xi_j)^2 = \max_{j=0,\ldots,N} \sum_{n=1}^j \M(k_{j\,n} - \alpha_n )^2 \\
= \max_{j=0,\ldots,N} \sum_{n=1}^j \big(\M(k_{j\,n} - \M \alpha_n )^2+\mathrm{Var}(\alpha_n)\big)\ge \max_{j=0,\ldots,N} \sum_{n=1}^j \M(k_{j\,n} - \M \alpha_n )^2.
\end{gather*}
So we can assume that $\xi$ has a form $\xi_k=\sum_{j=1}^k
a_j\zeta_j$, $k=1,\dots,N$, with some non-random $a_1,\dots,a_n$. Then
\begin{equation*}
G(\xi)= \max_{t=1,\ldots,N} \sum_{s=1}^t(k_{ts}-a_s)^2 =: F(a).
\end{equation*}

Thus, we have arrived to the following optimization problem:
$$
\min F(a),\qquad a\in\R^N.
$$

For fixed  $N$ and $H$ we solve this problem numerically by using the  MATLAB \texttt{fminimax} function.

The following table gives the values of the functional for different
$H$ and $N=200$.
\begin{table}[h]
\begin{tabular}{|c|c|c|c|c|c|c|c|c|c|}
\hline
H & .55 & .6 & .65 & .7 & .75 & .8 & .85 & .9 & .95\\
\hline
$\min F(a)$ & .0013  & .0051    & .0112  & .0200   & .0320    & .0482    & .0705   & .1023   & .1511 \\
\hline
\end{tabular}
\end{table}

Figure~\ref{fig:valminf} shows the values of  $\min F$ for $H$ from
$0.51$ to $0.99$ with a step $0.01$ for $N=200$. Figure~\ref{fig:minimizer} contain graphs of the minimizing vector (blue) and the scaled ``distance'' $R(t)=\sum_{s=1}^t(k_{ts}-a_s)^2$ (red), when $H = 0.75$ and $N = 500$. For other values of $H$ the picture
is similar: $a$ is (mainly) decreasing and looks close to constant on the sets of maxima of $R$.

\begin{figure}[htb]
\begin{center}
\includegraphics[width=.9\textwidth]{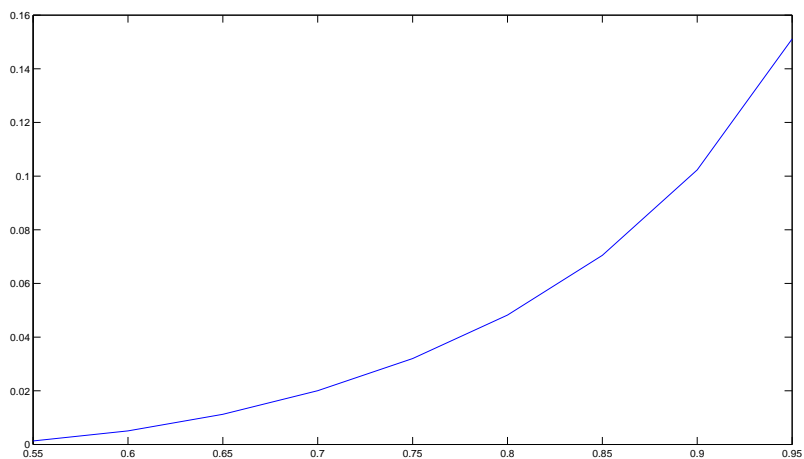}
\end{center}
\caption{Values of  $\min F$ for $H$ from
$0.51$ to $0.99$.}\label{fig:valminf}
\end{figure}

\begin{figure}[htb]
\begin{center}
\includegraphics[width=.9\textwidth]{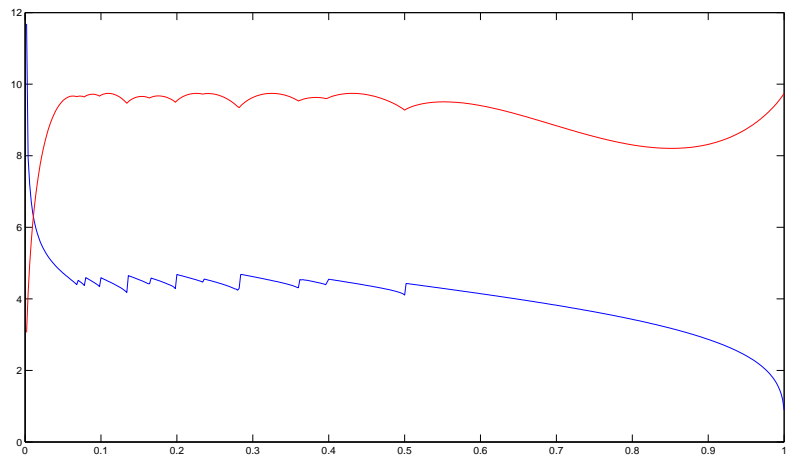}
\end{center}
\caption{The minimizing vector (blue) and the scaled distance (red) for $H = 0.75$}\label{fig:minimizer}
\end{figure}

\end{document}